\documentclass[11pt]{amsart}
\usepackage{multirow,bigdelim}
\usepackage{amsfonts}
\usepackage{dsfont}
\usepackage{amsmath}
\usepackage{mathrsfs}
\usepackage{todonotes}
\usepackage{hyperref}
\usepackage{multimedia}
\usepackage{graphicx}
\usepackage{indentfirst}
\usepackage{bm}
\usepackage{amsthm}
\usepackage{mathrsfs}
\usepackage{latexsym}
\usepackage{amsmath}
\usepackage{amssymb}
\usepackage{microtype}
\usepackage{relsize}
\usepackage{hyperref}

\voffset = -56pt \hoffset = -48pt
\textwidth = 6.675in

\linespread{1.2}
\textheight = 23.5cm
\DeclareSymbolFont{SY}{U}{psy}{m}{n}
\DeclareMathSymbol{\emptyset}{\mathord}{SY}{'306}

\theoremstyle{plain}

\newtheorem{thm}{Theorem}[section]
\newtheorem{cor}[thm]{Corollary}
\newtheorem{lem}[thm]{Lemma}

\newtheorem{defn}[thm]{Definition}

\theoremstyle{definition}

\newtheorem{ex}[thm]{Example}

\numberwithin{equation}{section}


\setlength{\arraycolsep}{1.25pt}

\begin{document}
\title[On the $N$-hypercontractions and similarity of multivariable weighted shifts]{On the $N$-hypercontractions and similarity of multivariable weighted shifts}

\author{Yingli Hou} \author{Shanshan Ji} \author{Jing Xu}
\email{houyingli0912@sina.com, jishanshan15@outlook.com, xujingmath@outlook.com}
\address{School of Mathematical Sciences, Hebei Normal University, Shijiazhuang, Hebei 050016, China}

\thanks{This work was supported by the National Natural Science Foundation of China, Grant No. 11922108 and 12001159. }

\subjclass[2000]{Primary 47C15, 47B13; Secondary 47B48, 47L40}

\keywords{The class $\mathcal{B}_{n}^{m}(\Omega)$; Similarity; $N$-hypercontraction; Curvature inequality}

\begin{abstract}
 In \cite{SH}, A. L. Shields proved a well-known theorem for the similarity of unilateral weighted shift operators. By using the generalization of this theorem for multivariable weighted shifts and  the curvature of holomorphic bundles,  we  give a necessary and sufficient condition for the similarity of  $m$-tuples in Cowen-Douglas class. We also present a necessary condition for commuting $m$-tuples of backward weighted shift operators to be $n$-hypercontractive in terms of the weight sequences.
\end{abstract}

\maketitle

\section{Introduction}

For $m\geq1$, let $\Omega$ be a bounded connected open subset of $m$-dimensional complex space $\mathbb{C}^{m}$, $\mathcal{L}(\mathcal{H})^{m}$ be the space of all commuting $m$-tuples $\mathbf{T}=(T_{1},\cdots,T_{m})$ of bounded linear operators on the complex separable Hilbert space $\mathcal{H}$ and $\mathcal{L}(\mathcal{H})^{1}$ is written as $\mathcal{L}(\mathcal{H})$.
Let $\mathbf{T}=(T_{1},\cdots,T_{m}),\mathbf{ S}=(S_{1},\cdots,S_{m})\in\mathcal{L}(\mathcal{H})^{m}.$ If there is a unitary $U$ such that $UT_{i}=S_{i}U,1\leq i\leq m$, then $\mathbf{T}$ and $\mathbf{S}$ are unitarily equivalent (denoted by $\mathbf{T}\sim_{u}\mathbf{S}$).
If there is an invertible operator $X$ such that $XT_{i}=S_{i}X,1\leq i\leq m$, then $\mathbf{T}$ and $\mathbf{S}$ are similar (denoted by $\mathbf{T}\sim_{s}\mathbf{S}$). 
However, it is not easy to characterize the similarity (unitary equivalence) of any two commuting tuples of operators. Thus one can only consider the similarity classification in some special classes.

In \cite{CD,CD2}, Cowen and Douglas introduced the class $\mathcal{B}_{n}^{m}(\Omega)$ and proved that two $m$-tuples $\mathbf{T}=(T_{1},\cdots,T_{m})$ and $\mathbf{ S}=(S_{1},\cdots,S_{m})$ in $\mathbf{\mathcal{B}}_{n}^{m}(\Omega)$ are unitarily equivalent if and only if the vector bundles $E_\mathbf{T}$ and $E_{\mathbf{S}}$ are equivalent as Hermitian holomorphic vector bundles.
They also provide a large number of local criteria for holomorphic bundle equivalence. For the Hermitian bundle associated with the Cowen-Douglas class, the unitary transformation preserves its local rigidity, so the local properties of the bundles are also valid as a whole.
In particular, the curvature and its covariant derivative of Hermitian holomorphic bundles corresponding to tuples in $\mathcal{B}_{n}^{m}(\Omega)$ are shown as a set of complete unitary invariants.
Thus the curvature contains a lot of information about tuples and is closely related to the unitary classification of tuples.

An effective way to study operators is to establish a model for operators with some common properties. There are rich theories of normal operators and subnormal operators, such as the spectrum theorem, von Neumann-Wold theorem and so on.
In order to generalize a famous model theorem on contractions given by Sz.-Nagy and Foias, Alger in \cite{A1} proposed the concept of $n$-hypercontraction, which is stronger than contraction.
For the Cowen-Douglas contraction $T$ with index one, the curvature of $T$ is dominated by the adjoint curvature of multiplication operator on Hardy space in \cite{M}.
Then this result is extended to tuples in \cite{MS1,MS2}.
So the $n$-hypercontraction of the tuple will affect its curvature.
Based on Sz.-Nagy-Foias theory in 1993, M\"{u}ller and Vasilescu gave the following theorem in \cite{V} to decide when a commuting $m$-tuple of operators is unitarily equivalent to the restriction of backward weighted shift on some vector-valued space.

\begin{lem}\cite{V}\label{1}
Let $\mathbf{T}=(T_{1},\cdots,T_{m})\in\mathcal{L}(\mathcal{H})^{m}$ be a commuting $m$-tuple of operators and $n\geq1$ be an integer. Then there exist a Hilbert space $E$ and an $\mathbf{S}^{*}_{n,E}$-invariant subspace $K$ of $H_{n,E}^{2}$ such that $\mathbf{T}$ is unitarily equivalent to $\mathbf{S}^{*}_{n,E}|K$
if and only if $\mathbf{T}$ is an $n$-hypercontraction with $\lim\limits_{k\rightarrow\infty}\mathbf{M}_{\mathbf{T}}^{k}(1)=0$ in the strong operator topology.
\end{lem}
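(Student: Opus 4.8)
The plan is to prove the equivalence through the multivariable functional model in the spirit of Sz.-Nagy--Foias and Agler. Write $\mathbf{M}_{\mathbf{T}}(X)=\sum_{i=1}^{m}T_{i}^{*}XT_{i}$ for the associated completely positive map on $\mathcal{L}(\mathcal{H})$, so that $\langle\mathbf{M}_{\mathbf{T}}^{k}(I)h,h\rangle=\sum_{|\alpha|=k}\frac{k!}{\alpha!}\|\mathbf{T}^{\alpha}h\|^{2}$ with $\mathbf{T}^{\alpha}=T_{1}^{\alpha_{1}}\cdots T_{m}^{\alpha_{m}}$, and set $\Delta_{n}(I)=\sum_{j=0}^{n}(-1)^{j}\binom{n}{j}\mathbf{M}_{\mathbf{T}}^{j}(I)$. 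The $n$-hypercontractivity of $\mathbf{T}$ means precisely that the lower defects $\sum_{j=0}^{k}(-1)^{j}\binom{k}{j}\mathbf{M}_{\mathbf{T}}^{j}(I)\ge0$ for $1\le k\le n$; in particular $\Delta_{n}(I)\ge0$, and I would put $D=\Delta_{n}(I)^{1/2}$, $E=\overline{\operatorname{ran}}\,D$, viewing $D$ as a map $\mathcal{H}\to E$.

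For the sufficiency direction, the heart of the matter is the reconstruction identity
\[
I=\sum_{k=0}^{\infty}\binom{n+k-1}{k}\,\mathbf{M}_{\mathbf{T}}^{k}\big(\Delta_{n}(I)\big)\qquad(\text{strong operator topology}),
\]
which is the rigorous form of the formal factorization $\mathrm{id}=(\mathrm{id}-\mathbf{M}_{\mathbf{T}})^{-n}(\mathrm{id}-\mathbf{M}_{\mathbf{T}})^{n}$ together with $(\mathrm{id}-\mathbf{M}_{\mathbf{T}})^{-n}=\sum_{k}\binom{n+k-1}{k}\mathbf{M}_{\mathbf{T}}^{k}$. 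I would establish it by a telescoping argument: the $n$-hypercontractivity keeps every partial sum positive and monotone, while the remainder is dominated by $\mathbf{M}_{\mathbf{T}}^{N}(I)$ and is annihilated by the purity hypothesis $\lim_{k}\mathbf{M}_{\mathbf{T}}^{k}(I)=0$. Expanding $\mathbf{M}_{\mathbf{T}}^{k}$ by the multinomial rule turns this into
\[
\|h\|^{2}=\sum_{\alpha}\binom{n+|\alpha|-1}{|\alpha|}\frac{|\alpha|!}{\alpha!}\,\big\|D\,\mathbf{T}^{\alpha}h\big\|^{2},\qquad h\in\mathcal{H},
\]
which is exactly the norm needed to define an isometry $V:\mathcal{H}\to H^{2}_{n,E}$ by $Vh=\sum_{\alpha}c_{\alpha}\,(D\,\mathbf{T}^{\alpha}h)\otimes z^{\alpha}$, the scalars $c_{\alpha}$ being fixed by the monomial norms dictated by the reproducing kernel $(1-\langle z,w\rangle)^{-n}$ of $H^{2}_{n}$.

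It then remains to check the intertwining $VT_{i}=S_{i}^{*}V$ for each component $S_{i}^{*}$ of $\mathbf{S}^{*}_{n,E}$; this is a reindexing computation matching the raising $\mathbf{T}^{\alpha}\mapsto\mathbf{T}^{\alpha+e_{i}}$ against the action of the backward shift on $z^{\alpha}$, and the weights $c_{\alpha}$ are calibrated exactly so that it holds. Consequently $K:=\operatorname{ran}V$ is invariant under each $S_{i}^{*}$ and $V$ implements $\mathbf{T}\sim_{u}\mathbf{S}^{*}_{n,E}|K$. For the necessity direction the key observation is that, when $K$ is $\mathbf{S}^{*}_{n,E}$-invariant, one has $\mathbf{T}^{\alpha}=\mathbf{S}^{*\alpha}_{n,E}|K$, so that the moments satisfy $\|\mathbf{T}^{\alpha}h\|^{2}=\|\mathbf{S}^{*\alpha}_{n,E}h\|^{2}$ for $h\in K$; hence every defect $\sum_{j}(-1)^{j}\binom{k}{j}\mathbf{M}^{j}(I)$ and the quantity $\mathbf{M}^{k}(I)$ transfer verbatim from $\mathbf{S}^{*}_{n,E}$ to $\mathbf{T}$. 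Thus it suffices to verify, by a direct computation on monomials with the kernel $(1-\langle z,w\rangle)^{-n}$, that the model $\mathbf{S}^{*}_{n,E}$ is itself a pure $n$-hypercontraction (its top defect $\Delta_{n}(I)$ being the projection onto the constants $E$), after which unitary equivalence finishes the argument.

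The main obstacle is the reconstruction identity: one must show the telescoped remainder is genuinely controlled by $\mathbf{M}_{\mathbf{T}}^{N}(I)$, so that $n$-hypercontractivity keeps the intermediate terms positive and the purity hypothesis kills the tail. This single identity simultaneously delivers the well-definedness, boundedness, and isometric property of $V$, and it is the only place where both hypotheses are used in an essential and combined way.
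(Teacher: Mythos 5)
The paper never proves this statement: it is Lemma~\ref{1}, imported verbatim from M\"{u}ller--Vasilescu \cite{V}, so your proposal has to be measured against their argument. Your overall architecture does reproduce it faithfully: the defect $D=(\Delta^{(n)}_{\mathbf{T}})^{1/2}$, $E=\overline{\operatorname{ran}}\,D$, the reconstruction identity, the isometry $V$ with weights calibrated to $\rho_{n}(\alpha)=\frac{(n+|\alpha|-1)!}{\alpha!(n-1)!}$, the intertwining $VT_{i}=S_{i}^{*}V$, and necessity by restricting the pure hypercontractive model to an invariant subspace (your observation that $\Delta^{(n)}$ of the model is the projection onto the constants is also correct).

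However, there is a genuine gap at exactly the step you single out as the crux. For $n\geq 2$ the telescoped remainder is \emph{not} controlled by $\mathbf{M}_{\mathbf{T}}^{N}(I)$ in a way that purity can kill: writing $S^{(n)}_{N}=\sum_{k=0}^{N}\binom{n+k-1}{k}\mathbf{M}_{\mathbf{T}}^{k}(\Delta^{(n)}_{\mathbf{T}})$, the telescoping identity reads
\begin{equation*}
S^{(n)}_{N}=S^{(n-1)}_{N}-\tbinom{N+n-1}{N}\,\mathbf{M}_{\mathbf{T}}^{N+1}\bigl(\Delta^{(n-1)}_{\mathbf{T}}\bigr),
\end{equation*}
so the remainder carries a coefficient growing like $N^{n-1}$, and purity ($\mathbf{M}_{\mathbf{T}}^{N}(I)\to 0$ in SOT) does not imply $N^{n-1}\mathbf{M}_{\mathbf{T}}^{N}(I)\to 0$ in SOT. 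This failure is not hypothetical: for the backward Bergman shift ($m=1$, $n=2$), which \emph{is} a pure $2$-hypercontraction, one has $\mathbf{M}^{N}(I)f_{k}=\frac{k-N+1}{k+1}f_{k}$ on the orthonormal basis, and taking $h=\sum_{k}a_{k}f_{k}$ with $|a_{k}|^{2}\asymp \frac{1}{k\log^{2}k}$ gives $\langle N\mathbf{M}^{N}(I)h,h\rangle\gtrsim N/\log N\to\infty$. So any domination of the remainder through $\mathbf{M}^{N}(I)$ with coefficients of size $N^{n-1}$ cannot close the argument; it is the smallness of $\Delta^{(n-1)}_{\mathbf{T}}$, not of $I$, that matters. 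The correct completion (which is what M\"{u}ller--Vasilescu do, and what your sketch is missing) is an induction on $n$: positivity of each $\mathbf{M}^{k}(\Delta^{(n)}_{\mathbf{T}})$ makes $S^{(n)}_{N}$ increasing in $N$, the displayed identity plus $\Delta^{(k)}_{\mathbf{T}}\geq 0$ gives $S^{(n)}_{N}\leq S^{(n-1)}_{N}\leq\cdots\leq S^{(1)}_{N}=I-\mathbf{M}^{N+1}(I)\leq I$, hence $S^{(n)}_{N}$ converges in SOT; then the remainder $\binom{N+n-1}{N}\mathbf{M}^{N+1}(\Delta^{(n-1)}_{\mathbf{T}})$ converges to some $R\geq0$, and if $\langle Rh,h\rangle=c>0$ the terms of the \emph{convergent} series $\sum_{k}\binom{n+k-2}{k}\langle\mathbf{M}^{k}(\Delta^{(n-1)}_{\mathbf{T}})h,h\rangle$ would be of order $c(n-1)/k$, which diverges --- a contradiction, so $R=0$. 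Purity is used only in the base case $n=1$, where the telescoping is exact. Without this (or an equivalent summability/Abel-type argument), your isometry $V$ is not known to be well defined, and the sufficiency direction collapses.
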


In \cite{CS2}, Curto and Salinas pointed out that every $m$-tuple $\mathbf{T}\in \mathcal{B}_{n}^{m}(\Omega)$ can be realized as the adjoint of an $m$-tuple of multiplication operators by coordinate functions on some Hilbert space of holomorphic functions.
In the past four decades, Cowen, Douglas, Misra and many other mathematicians have done a lot of work on the classification of this class of operators (see \cite{CM2,DKT,HJK,CK,JJDG,JKX,JJD,KT,M,ZKH}).

The class $\mathcal{B}_{1}^{1}(\mathbb{D})$ contains many the adjoint of unilateral weighted shift operators.
Shields provided a large number of properties and results about these shifts in \cite{SH}, including the equivalent condition for the similarity of such operators.

\begin{lem}\cite{SH}\label{2}
Let $T_1$ and $T_2$ be unilateral weighted shifts, 
and their weight sequences are $\{\lambda_j\}_{j=0}^{\infty}$ and $\{\tilde{\lambda}_j\}_{j=0}^{\infty}$, respectively. Then $T_1\sim_{s}T_2$ if and only if there exist positive constants $C_1$ and $C_2$ such that
$0<C_1\leq \big |\frac{\lambda_k\lambda_{k+1}\cdots \lambda_l}{\tilde{\lambda}_k\tilde{\lambda}_{k+1}\cdots \tilde{\lambda}_l} \big | \leq C_2$
for all $0\leq k\leq l$.
\end{lem}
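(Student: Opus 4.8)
The plan is to reduce to positive weights, verify the ``if'' direction by an explicit diagonal intertwiner, and prove the ``only if'' direction by analysing the matrix of an arbitrary intertwining operator. First I would observe that every unilateral weighted shift is unitarily equivalent to the shift whose weights are the moduli $|\lambda_j|$ (absorb the phases into a diagonal unitary), and that the quantity $|\lambda_k\cdots\lambda_l/(\tilde\lambda_k\cdots\tilde\lambda_l)|$ depends only on these moduli. Since unitary equivalence is a special case of similarity and $\sim_s$ is an equivalence relation, this lets me assume throughout that the weights are positive (and, as is standard for this statement, nonzero). Realise both shifts on $\ell^2$ with a common orthonormal basis $\{e_j\}_{j\ge0}$, so that $T_1e_j=\lambda_je_{j+1}$ and $T_2e_j=\tilde\lambda_je_{j+1}$, and set $\beta_0=\tilde\beta_0=1$, $\beta_j=\lambda_0\cdots\lambda_{j-1}$, $\tilde\beta_j=\tilde\lambda_0\cdots\tilde\lambda_{j-1}$, and $\rho_j=\beta_j/\tilde\beta_j$. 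Because $\lambda_k\cdots\lambda_l/(\tilde\lambda_k\cdots\tilde\lambda_l)=\rho_{l+1}/\rho_k$, the stated two-sided bound over all $0\le k\le l$ is equivalent to the single requirement that $\{\rho_j\}$ be bounded above and bounded away from $0$: one implication is the case $k=0$, the other follows since $\rho_m\in[\rho_0C_1,\rho_0C_2]$ forces all ratios into $[C_1/C_2\cdot{\rm const},\dots]$ of the right form.

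For the ``if'' direction I would test the diagonal operator $De_j=\rho_j^{-1}e_j$. A one-line check gives $DT_1e_j=\lambda_j\rho_{j+1}^{-1}e_{j+1}$ and $T_2De_j=\tilde\lambda_j\rho_j^{-1}e_{j+1}$, and these agree because $\rho_{j+1}/\rho_j=\lambda_j/\tilde\lambda_j$. If $\{\rho_j\}\subset[a,b]$ with $0<a\le b$, then $D$ is bounded with bounded inverse $D^{-1}e_j=\rho_je_j$, so $DT_1=T_2D$ exhibits $T_1\sim_sT_2$.

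For the converse, take any invertible $X$ with $XT_1=T_2X$ and write $x_{ij}=\langle Xe_j,e_i\rangle$. Comparing coefficients of $e_i$ in $XT_1e_j=T_2Xe_j$ yields the recursion $\lambda_jx_{i,j+1}=\tilde\lambda_{i-1}x_{i-1,j}$ (with the convention $x_{-1,j}=0$). Reading the top row gives $x_{0,j}=0$ for $j\ge1$, and an induction on the row index then forces $x_{ij}=0$ whenever $i<j$, so $X$ is lower triangular; the same argument applied to $X^{-1}$, which intertwines in the opposite direction $X^{-1}T_2=T_1X^{-1}$, shows $X^{-1}$ is lower triangular as well. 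Iterating the recursion down the main diagonal gives $x_{jj}=x_{00}\,\rho_j^{-1}$. Since both $X$ and $X^{-1}$ are lower triangular, the diagonal of $XX^{-1}=I$ is the product of the diagonals, so $x_{00}\neq0$ and the $j$-th diagonal entry of $X^{-1}$ equals $\rho_j/x_{00}$. Finally $|x_{jj}|\le\|X\|$ and $|\rho_j/x_{00}|\le\|X^{-1}\|$ bound $\rho_j^{-1}$ and $\rho_j$ respectively, so $\{\rho_j\}$ is bounded above and below, which is exactly the desired condition.

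I expect the only genuine subtlety to lie in the necessity direction, namely in justifying that an arbitrary bounded invertible intertwiner is forced to be lower triangular \emph{and}, crucially, that its inverse is too. A bounded triangular operator on $\ell^2$ need not have triangular inverse in general, which is precisely why I would re-derive the triangularity of $X^{-1}$ directly from the intertwining relation $X^{-1}T_2=T_1X^{-1}$ rather than by inverting the matrix of $X$. Everything else is bookkeeping: the reduction to positive weights and the reformulation of the product bound as boundedness of $\{\rho_j\}$.
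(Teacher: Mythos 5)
The paper gives no proof of this lemma at all: it is quoted as a known result from Shields \cite{SH}, so there is no internal argument to compare against and your proof must stand on its own. It does. The reduction to positive weights, the reformulation of the product condition as two-sided boundedness of the ratios $\rho_j=\beta_j/\tilde\beta_j$, the diagonal intertwiner $De_j=\rho_j^{-1}e_j$ for sufficiency, and the analysis of an arbitrary invertible intertwiner for necessity are all correct and constitute the classical argument, essentially the one in Shields' survey. In particular you correctly identified and handled the one real trap in the necessity direction: a bounded lower-triangular operator on $\ell^2$ need not have a lower-triangular inverse, so triangularity of $X^{-1}$ must be re-derived from the reversed relation $X^{-1}T_2=T_1X^{-1}$ rather than inferred from that of $X$; with both operators triangular, the diagonal of $XX^{-1}=I$ factors entrywise, giving $x_{00}\neq 0$ and the two-sided bound $|x_{00}|/\|X\|\leq\rho_j\leq |x_{00}|\,\|X^{-1}\|$ exactly as you claim.
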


In order to find new invariants of shift operators, Clark and Misra first studied the similarity of backward weighted shift operators by using the quotient of the metric of associated holomorphic bundles in \cite{CM2,CM1,CM3}.
In a way, this result can be seen as a geometric version of the similar result due to Shields (Lemma \ref{2}).
Subsequently, the study of geometric similarity invariants of the weighted shift operator has become an important research branch of this subject (see \cite{DKT,HJK,JKX,KT}).
The notion of a weighted shift has a natural generalization to commuting tuples of operators.
However, there are few related studies on the similarity of tuples.

Inspired by the above results,  we extend Lemma \ref{2} to the case of commuting $m$-tuples of operators (see Theorem \ref{c4.2}). By using this theorem and Lemma \ref{1}, we characterize the similarity of $n$-hypercontractive commuting $m$-tuples in $\mathbf{\mathcal{B}}_{1}^{m}(\Omega)$ in terms of the difference of the curvatures (see Theorem \ref{3}).
Thus a natural question is the following: when a tuple is $n$-hypercontractive.
In the last section, we use the weight sequence to give a necessary condition of some $m$-tuple to be $n$-hypercontractive (see Theorem \ref{thm2}), and show that the $n$-hypercontraction assumption in Theorem \ref{3} is also necessary (see Example \ref{maincor}). The following are two main theorems of this paper.

\begin{thm}\label{c4.2}
Let $n$ be a positive integer and $\mathbf{T}=(T_{1},\cdots,T_{m}),\mathbf{S}=(S_{1},\cdots,S_{m})\in \mathcal{B}_{1}^{m}(\mathbb{B}^{m})$ be tuples of backward weighted shifts on Hilbert spaces with reproducing kernels
$\widetilde{K}(z,w)=(1-\langle z,w\rangle)^{-n}$ and $K(z,w)=\sum\limits_{i=0}^{\infty}a(i)(z_{1}\overline{w}_{1}+\cdots+z_{m}\overline{w}_{m})^{i}, a(i)>0$, respectively. If $\mathbf{S}$ is $n$-hypercontractive, then $\mathbf{T}$ is similar to $\mathbf{S}$ if and only if there exists a bounded plurisubharmonic function $\psi$ such that
  $$\mathcal{K}_\mathbf{T}(w)-\mathcal{K}_\mathbf{S}(w)=\sum \limits_{i,j=1}^{m}\frac{\partial^{2}\psi(w)}{\partial w_{i}\partial \overline{w}_{j}}dw_{i}\wedge d\overline{w}_{j},\quad w\in\mathbb{B}^{m}.$$
\end{thm}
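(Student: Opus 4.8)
The plan is to realise both line bundles by their reproducing-kernel sections and to work with the explicit metrics $h_{\mathbf{T}}(w)=\widetilde{K}(w,w)=(1-\|w\|^{2})^{-n}$ and $h_{\mathbf{S}}(w)=K(w,w)=\sum_{i\ge 0}a(i)\|w\|^{2i}$, where $\|w\|^{2}=\langle w,w\rangle$. Since for a tuple in $\mathcal{B}_{1}^{m}(\mathbb{B}^{m})$ the curvature is $\mathcal{K}_{\mathbf{T}}=-\partial\overline{\partial}\log h_{\mathbf{T}}$, the natural candidate potential is
$$\psi_{0}(w):=\log\frac{K(w,w)}{\widetilde{K}(w,w)}=\log h_{\mathbf{S}}(w)-\log h_{\mathbf{T}}(w),$$
which satisfies $\sum_{i,j}\frac{\partial^{2}\psi_{0}}{\partial w_{i}\partial\overline{w}_{j}}\,dw_{i}\wedge d\overline{w}_{j}=\mathcal{K}_{\mathbf{T}}-\mathcal{K}_{\mathbf{S}}$ identically on $\mathbb{B}^{m}$. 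The first thing I would record is that the $n$-hypercontractivity of $\mathbf{S}$ forces the curvature inequality $\mathcal{K}_{\mathbf{T}}(w)-\mathcal{K}_{\mathbf{S}}(w)\ge 0$—the model $n$-shift with kernel $\widetilde{K}$ dominates the curvature of any $n$-hypercontraction, exactly in the spirit of the contraction estimate recalled from \cite{M,MS1,MS2}—so that $\psi_{0}$ is automatically plurisubharmonic. I would also note that both kernels are $U(m)$-invariant, so every quantity is radial and the data reduce to the scalar sequences $a(i)$ and $\widetilde{a}(i)=\binom{n+i-1}{i}$, with the monomial weights of $\mathbf{S}$ and $\mathbf{T}$ depending only on $|\alpha|$.

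For necessity, suppose $\mathbf{T}\sim_{s}\mathbf{S}$ and pass to the commuting tuples of multiplication operators. An invertible intertwiner $Y$ satisfies $Y M^{\mathbf{S}}_{z_{k}}=M^{\mathbf{T}}_{z_{k}}Y$ for every $k$; applying $Y$ to $z^{\alpha}=M^{\mathbf{S}}_{z^{\alpha}}1$ gives $Yg=fg$ with $f:=Y1$, so $Y=M_{f}$ is multiplication by a zero-free holomorphic function whose reciprocal is again a multiplier. Testing the adjoints on the kernel functions $K(\cdot,w)$ yields
$$\|M_{f}^{-1}\|^{-1}\le |f(w)|\left(\frac{h_{\mathbf{S}}(w)}{h_{\mathbf{T}}(w)}\right)^{1/2}\le\|M_{f}\|,$$
so $\psi_{0}+2\log|f|$ is bounded. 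As $f$ is zero-free, $2\log|f|$ is pluriharmonic; hence $\psi:=\psi_{0}+2\log|f|$ is bounded, has complex Hessian equal to $\mathcal{K}_{\mathbf{T}}-\mathcal{K}_{\mathbf{S}}$, and is plurisubharmonic by the curvature inequality. This exhibits the required $\psi$.

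For sufficiency, let $\psi$ be bounded and plurisubharmonic with $\partial\overline{\partial}\psi=\mathcal{K}_{\mathbf{T}}-\mathcal{K}_{\mathbf{S}}=\partial\overline{\partial}\psi_{0}$, so that $\psi-\psi_{0}$ is pluriharmonic. Averaging $\psi$ over the unitary group $U(m)$ produces a radial, bounded, plurisubharmonic function whose difference with the radial $\psi_{0}$ is a radial pluriharmonic function, hence a constant; therefore $\psi_{0}$ is itself bounded. Being radial and plurisubharmonic on $\mathbb{B}^{m}$, $\psi_{0}$ is nondecreasing in $r=\|w\|^{2}$, so $K(w,w)/\widetilde{K}(w,w)$ increases to a finite limit as $r\to 1$. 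Using the monotone behaviour of $c(i):=a(i)/\widetilde{a}(i)$ imposed by the $n$-hypercontractivity of $\mathbf{S}$, an Abelian argument upgrades the boundedness of this ratio of power series to the two-sided estimate $0<\inf_{i}c(i)\le\sup_{i}c(i)<\infty$. Finally, the diagonal operator $Y e^{\mathbf{S}}_{\alpha}=\sqrt{c(|\alpha|)/c(0)}\,e^{\mathbf{T}}_{\alpha}$ on the orthonormal monomial bases intertwines the two tuples and, by the two-sided bound on $c$, is bounded with bounded inverse; hence $\mathbf{T}\sim_{s}\mathbf{S}$.

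The step I expect to be the main obstacle is the passage in the sufficiency argument from boundedness of the diagonal ratio $K(w,w)/\widetilde{K}(w,w)$ to the coefficientwise bound on $c(i)$. For arbitrary radial kernels this implication is false: one may perturb $a(i)$ on a sparse set of indices without appreciably changing the ratio of the two power series, so that Shields' criterion (Lemma \ref{2}) fails while $\psi_{0}$ stays bounded. What closes this Tauberian gap is exactly the $n$-hypercontractivity hypothesis, which simultaneously guarantees that $\psi_{0}$ is plurisubharmonic (making the ratio radially monotone) and constrains the finite differences of the weight sequence; pinning down the precise monotonicity of $c(i)$ and verifying the Abelian/Tauberian passage is the technical heart of the proof. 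Secondary points needing care are the exact sign and normalisation conventions in the curvature, and the justification that the intertwiner is genuinely a multiplier, via density of the polynomials and a closed-graph boundedness argument for $M_{f}$.
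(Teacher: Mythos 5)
Your overall architecture is sound and, where it is complete, correct; but it is not the paper's route. For necessity, the paper does not look at intertwiners as multipliers: it invokes its generalized Shields criterion for tuples of weighted shifts (Theorem 3.1), telescopes the weight ratios, and chains the resulting bounds coordinate by coordinate down to the origin to conclude that the two weight sequences are uniformly comparable; this shows that the canonical potential $\psi_{0}=\log\bigl(K(w,w)/\widetilde{K}(w,w)\bigr)$ is itself bounded, with no pluriharmonic correction $2\log|f|$ needed. Your multiplier argument ($Y=M_{f}$, kernel estimates on $\widetilde{K}(\cdot,w)$) is a valid substitute and does not even use diagonality. For sufficiency, your $U(m)$-averaging plus ``radial pluriharmonic implies constant'' cleanly replaces the paper's step (the paper writes the pluriharmonic difference as $\log|\phi|^{2}$ for a zero-free holomorphic $\phi$ and rules out blow-up of the metric ratio via the maximum modulus principle), and your explicit diagonal intertwiner $Ye_{\alpha}^{\mathbf{S}}=\sqrt{c(|\alpha|)/c(0)}\,e_{\alpha}^{\mathbf{T}}$ replaces the paper's case analysis verifying the Shields bounds for all $\alpha$, $l$, $i$. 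These are genuine simplifications, made possible by the radial and diagonal structure of the two kernels.

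The one genuine gap is exactly the step you flag: the passage from boundedness of $K(w,w)/\widetilde{K}(w,w)$ on $\mathbb{B}^{m}$ to the two-sided bound $0<\inf_{i}c(i)\le\sup_{i}c(i)<\infty$. You assert that it follows from ``the monotone behaviour of $c(i)$ imposed by $n$-hypercontractivity'' together with an Abelian argument, but you prove neither the monotonicity nor the implication, and since this is the technical heart of the theorem (and, as you correctly note, false without the hypercontractivity hypothesis), the proposal as written is only an outline at its decisive point. The paper closes this differently: hypercontractivity yields nonnegativity of the iterated differences of $a$, and an induction on $n$ (the paper's ``Claim'') gives $a(k)\asymp k^{n-1}$, after which the Shields criterion is checked. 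Your monotonicity idea does in fact work and can be completed in a few lines: hypercontractivity of $\mathbf{S}$ (the paper's computation, including the truncated sums for multi-indices of length less than $k$) shows that $g(x):=(1-x)^{n}\sum_{i\ge 0}a(i)x^{i}$ has all Taylor coefficients $g_{l}\ge 0$; hence $a(i)=\sum_{j\le i}g_{j}\binom{n+i-j-1}{i-j}$ and $c(i)=\sum_{j\le i}g_{j}\prod_{s=0}^{j-1}\frac{i-s}{n+i-1-s}$, which is nondecreasing in $i$ (each factor is, and terms are added) and increases to $\sum_{l}g_{l}$, a quantity finite precisely because the ratio of kernels is bounded; thus $a(0)=c(0)\le c(i)\le\sum_{l}g_{l}$ for all $i$. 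The same observation ($g_{l}\ge 0$, so $\psi_{0}=\log\sum_{l}g_{l}|w|^{2l}$ is the logarithm of a sum of squared moduli of holomorphic functions) also makes your appeal to the curvature-domination results of \cite{M,MS1,MS2} self-contained, rather than a citation of theorems stated in other settings.
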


\begin{thm}
Let $m\geq2$ be a positive integer and $\mathbf{T}=(T_{1},\cdots,T_{m})$ be a commuting $m$-tuple of backward weighted shifts on Hilbert space $\mathcal{H}$ with reproducing kernel
$K(z,w)=\sum\limits_{\alpha\in\mathbf{Z}_{+}^{m}}\rho(\alpha)z^{\alpha}\overline{w}^{\alpha}$. If $\mathbf{T}$ is $n$-hypercontractive, then for any non-zero
$\alpha=(\alpha_{1},\cdots,\alpha_{m})\in\mathbf{Z}_{+}^{m}$, $$\sum\limits_{\mbox{\tiny$\begin{array}{c}
 \beta\in{\mathbf{Z}}_{+}^{m}\\
\beta\leq\alpha \\
|\alpha-\beta|=1\end{array}$}}\frac{\rho(\beta)}{\rho(\alpha)}\leq\frac{|\alpha|}{|\alpha|+n-1}.$$
\end{thm}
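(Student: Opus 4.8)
The plan is to diagonalize every operator in sight in the monomial basis and then recast $n$-hypercontractivity as a single positivity statement about a backward-difference operator acting on the weight sequence $\rho$. I first note that $K(z,w)=\sum_\alpha\rho(\alpha)z^\alpha\overline{w}^\alpha$ makes $\{e_\alpha:=\sqrt{\rho(\alpha)}\,z^\alpha\}_{\alpha\in\mathbf Z_+^m}$ an orthonormal basis, with respect to which $T_i=M_{z_i}^*$ acts as $T_ie_\alpha=\sqrt{\rho(\alpha-\epsilon_i)/\rho(\alpha)}\,e_{\alpha-\epsilon_i}$ (and $T_ie_\alpha=0$ when $\alpha_i=0$), where $\epsilon_i\in\mathbf Z_+^m$ is the $i$-th standard vector. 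Writing $\mathbf M_{\mathbf T}(X)=\sum_{i=1}^m T_i^*XT_i$, an immediate induction shows that each $\mathbf M_{\mathbf T}^{\,k}(I)$ is diagonal with $\alpha$-entry $d_k(\alpha)=\sum_{|\gamma|=k}\frac{k!}{\gamma!}\,\rho(\alpha-\gamma)/\rho(\alpha)$; in particular $d_1(\alpha)=\sum_{\beta\le\alpha,\,|\alpha-\beta|=1}\rho(\beta)/\rho(\alpha)$ is exactly the left-hand side of the claim.

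Next I would reformulate the hypercontractivity. Since all the $\mathbf M_{\mathbf T}^{\,k}(I)$ are diagonal, the top defect $B_n:=(I-\mathbf M_{\mathbf T})^n(I)\ge0$ is equivalent to the scalar inequalities $\sum_{k=0}^n(-1)^k\binom nk d_k(\alpha)\ge0$. Multiplying through by $\rho(\alpha)>0$ and collapsing the multinomial sums via $\sum_{k=0}^n(-1)^k\binom nk E^k=(I-E)^n$, where $E=E_1+\cdots+E_m$ and $(E_if)(\alpha)=f(\alpha-\epsilon_i)$ is the backward shift, this becomes the single pointwise condition $[(I-E)^n\rho](\alpha)\ge0$ for every $\alpha$. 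Thus I use only one consequence of $n$-hypercontractivity, namely $u:=(I-E)^n\rho\ge0$, but at all indices simultaneously.

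The key idea is then to invert $I-E$. Because $E$ is locally nilpotent on functions $\mathbf Z_+^m\to\mathbb R$ (for fixed $\alpha$, $E^jf(\alpha)$ vanishes once $j>|\alpha|$), the operator $G:=(I-E)^{-1}=\sum_{j\ge0}E^j$ is a finite sum at each point, so $\rho=G^nu$ and $(I-E)\rho=G^{n-1}u$ are genuine identities. A direct binomial computation gives $(I-E)\rho_k=\rho_{k-1}$ for $k\ge2$ and $(I-E)\rho_1=\delta_0$, where $\rho_k(\nu)=\frac{(|\nu|+k-1)!}{(k-1)!\,\nu!}$ is the coefficient sequence of $(1-\langle z,w\rangle)^{-k}$; hence $G^k\delta_0=\rho_k$ and, by translation, $(G^k\delta_\beta)(\alpha)=\rho_k(\alpha-\beta)$ for $\alpha\ge\beta$. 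Writing $u=\sum_\beta u(\beta)\delta_\beta$ with $u(\beta)\ge0$, the two identities become the finite sums $\rho(\alpha)=\sum_{\beta\le\alpha}u(\beta)\rho_n(\alpha-\beta)$ and $(I-E)\rho(\alpha)=\sum_{\beta\le\alpha}u(\beta)\rho_{n-1}(\alpha-\beta)$; this is a deconvolution of $\rho$ against the model kernel, whose extremal case $u=\delta_0$ is precisely the tuple with kernel $\widetilde K$.

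Finally I would observe that the target is equivalent to $(|\alpha|+n-1)(I-E)\rho(\alpha)\ge(n-1)\rho(\alpha)$, since $(I-E)\rho(\alpha)=\rho(\alpha)\big(1-d_1(\alpha)\big)$. Substituting the two expansions and using the model relation $(n-1)\rho_n(\nu)=(|\nu|+n-1)\rho_{n-1}(\nu)$, the difference telescopes to
\[
(|\alpha|+n-1)(I-E)\rho(\alpha)-(n-1)\rho(\alpha)=\sum_{\beta\le\alpha}|\beta|\,u(\beta)\,\rho_{n-1}(\alpha-\beta),
\]
which is manifestly $\ge0$ because $u(\beta)\ge0$ and $\rho_{n-1}>0$; this settles $n\ge2$, while $n=1$ is immediate since the claim then reads $d_1\le1$, i.e. $B_1=I-\mathbf M_{\mathbf T}(I)\ge0$. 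The step I expect to be the crux is the reformulation-plus-inversion of the middle two paragraphs: recognizing that the entire binomial tower in $B_n$ collapses to $(I-E)^n\rho$ and that inverting $I-E$ reproduces exactly the model kernels $\rho_n,\rho_{n-1}$. Once that structure is exposed, the positivity of $u$ reduces the whole statement to the trivial scalar fact $|\beta|\ge0$, with equality detecting precisely the extremal model tuple.
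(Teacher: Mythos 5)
Your proof is correct, and it arrives at exactly the same central identity as the paper, but by a genuinely different derivation. Both arguments share the same skeleton: everything is diagonal in the monomial basis, so $n$-hypercontractivity reduces to the scalar positivity (\ref{eq2}), and only the top case $k=n$ is used, at all lattice points $\beta\le\alpha$ simultaneously --- your condition $u:=(I-E)^n\rho\ge0$. From there the paper posits the decomposition (\ref{eq1}) with coefficients $x_\gamma=-\frac{(n-2+|\gamma|)!}{\gamma!(n-2)!}\frac{|\alpha-\gamma|}{|\alpha|}$ and verifies it by matching the coefficient of each $\rho(\alpha-\beta)$ in three separate cases ($|\beta|\le 1$, $2\le|\beta|\le n$, $n<|\beta|\le|\alpha|$), the needed binomial identities (Lemma \ref{lem1}, used together with Lemma \ref{lem5.2}) being extracted from the one-variable expansions of $(1-x)^n\cdot(1-x)^{-(n-1)}=1-x$ and its derivative. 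You instead build the identity structurally: local nilpotence of $E$ lets you invert $I-E$, the inverse powers reproduce the model coefficients via $G^k\delta_0=\rho_k$, so $\rho$ and $(I-E)\rho$ become convolutions of $u$ against $\rho_n$ and $\rho_{n-1}$, and the relation $(n-1)\rho_n(\nu)=(|\nu|+n-1)\rho_{n-1}(\nu)$ telescopes the difference to $\sum_{\beta\le\alpha}|\beta|\,u(\beta)\,\rho_{n-1}(\alpha-\beta)\ge0$. Multiplying the paper's (\ref{eq1}) through by $|\alpha|\rho(\alpha)$ and noting $-x_\gamma=\rho_{n-1}(\gamma)\,|\alpha-\gamma|/|\alpha|$ shows the two identities are word-for-word the same equation, so the proofs differ only in how that equation is obtained. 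Your route buys conceptual clarity: it explains where the coefficients $x_\gamma$ come from, makes explicit that only the defect $\triangle_{\mathbf{T}}^{(n)}\ge0$ is needed (true of the paper's argument as well, though unremarked there), and identifies the extremal case, $u$ supported at the origin, with the model kernel $(1-\langle z,w\rangle)^{-n}$. The paper's route buys elementary self-containment: it is pure term-by-term combinatorics, with no operator calculus on sequence space and no need to justify the inversion of $I-E$.
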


\section{\sf Preliminaries}

Let $\mathbf{T}=(T_{1},\cdots,T_{m})\in\mathcal{L}(\mathcal{H})^{m}$ denote the $m$-tuple of operators such that $T_{i}T_{j}=T_{j}T_{i},1\leq i,j\leq m$ will be designated as a commuting $m$-tuple. Let $\mathbf{Z}_{+}^{m}$ be the collection of $m$-tuples of nonnegative integers.
For $\alpha=(\alpha_{1},\cdots,\alpha_{m})\in \mathbf{Z}_{+}^{m}$, setting $|\alpha|=|\alpha_{1}|+\cdots+|\alpha_{m}|$, $\alpha!=\alpha_{1}!\cdots\alpha_{m}!$, $\mathbf{T}^{\alpha}=T_{1}^{\alpha_{1}}\cdots T_{m}^{\alpha_{m}}$ and $\mathbf{T}^{*}=(T_{1}^{*},\cdots,T_{m}^{*}).$ For any $\alpha, \beta\in \mathbf{Z}_{+}^{m}$, define $\alpha+\beta=(\alpha_{1}+\beta_{1},\cdots,\alpha_{m}+\beta_{m})$ and $\alpha\leq\beta$ whenever $\alpha_{i}\leq\beta_{i}, 1\leq i\leq m$.

\subsection{The Cowen-Douglas class $\mathbf{\mathcal{B}}_{n}^{m}(\Omega)$}

\begin{defn} \cite{CD,CD2}
For $\Omega$ a connected open subset of $\mathbb{C}^{m}$ and $n$ a positive integer, let $\mathbf{\mathcal{B}}_{n}^{m}(\Omega)$ denote the Cowen-Douglas class of commuting $m$-tuples  $\mathbf{T}=(T_{1},T_{2},\cdots,T_{m})\in\mathcal{L}(\mathcal{H})^{m}$ satisfying: \begin{itemize} \item [(1)]$ran(\mathbf{T}-w)$ is closed for all $w$ in $\Omega$; \item [(2)]$\bigvee \limits_{w{\in}{\Omega}} \ker(\mathbf{T}-w)=\mathcal H$; and \item [(3)]$\dim \ker (\mathbf{T}-w)=n$ for $w$ in $\Omega$, \end{itemize}
where $\ker(\mathbf{T}-w)$ means the joint kernel $\bigcap_{i=1}^m\ker(T_i-w_i)$ for $w=(w_{1},w_{2},\cdots,w_{m})$.
\end{defn}

For a tuple $\mathbf{T}$ in $\mathbf{\mathcal{B}}_{n}^{m}(\Omega),$ let $(E_{\mathbf{T}},\pi)$ denote the sub-bundle of the trivial bundle $\Omega\times \mathcal{H}$ defined by $$E_\mathbf{T}=\{(w, x)\in \Omega\times {\mathcal H}: x \in \ker (\mathbf{T}-w)\},\quad \pi(w,x)=w.$$ Two commuting tuples $\mathbf{T}$ and $\widetilde{\mathbf{T}}$ in $\mathbf{\mathcal{B}}_{n}^{m}(\Omega)$ are unitarily equivalent if and only if the vector bundles $E_\mathbf{T}$ and $E_{\widetilde{\mathbf{T}}}$ are equivalent as Hermitian holomorphic vector bundles.
Since $\dim \ker (\mathbf{T}-w)=n$ for all $w$ in $\Omega$, the rank of the holomorphic vector bundle $E_\mathbf{T}$ is $n$. Let $\sigma=\{\sigma_{1},\sigma_{2},\ldots,\sigma_{n}\}$ be the holomorphic frame of $E_\mathbf{T}$ and form the metric of inner products $h(w):=(\langle \sigma_j(w),\sigma_i(w \rangle)_{i,j=1}^{n},\, w=(w_{1},\cdots,w_{m})\in\Omega,$
then the curvature $\mathcal{K}_\mathbf{T}$ of the bundle $E_{\mathbf{T}}$ is given by the following formula \begin{align}\label{curvature} \mathcal{K}_\mathbf{T}(w)&:=\sum \limits_{i,j=1}^{m}\frac{\partial}{\partial \overline{w}_{j}}\big(h^{-1}(w)\frac{\partial}{\partial w_{i}} h(w)\big)d\overline{w}_{j}\wedge dw_{i}. \end{align}
In fact, $\mathcal{K}_\mathbf{T}(w)$ depends on the choice of holomorphic frame $\sigma$.
We also use $\mathcal{K}_\mathbf{T}(\sigma)(w)$ to represent the curvature function of $E_{\mathbf{T}}$ associated with the frame $\sigma$.
In particular, when $\mathbf{T}\in \mathcal{B}_{1}^{m}(\Omega)$, the curvature of the bundle $E_{\mathbf{T}}$ can be defined as $\mathcal{K}_\mathbf{T}(w)=-\sum \limits_{i,j=1}^{m}\frac{\partial^{2}\log\|\gamma(w)\|^{2}}{\partial w_{i}\partial \overline{w}_{j}}dw_{i}\wedge d\overline{w}_{j},$ where $\gamma$ is a non-vanishing holomorphic section of $E_\mathbf{T}$.

Let $\mathbf{T}=(T_1,\cdots,T_m)\in \mathcal{B}_n^m(\Omega)$. Then $m$-tuple $\mathbf{T}$ is unitarily equivalent to
the adjoint $\mathbf{M}_z^*=(M_{z_1}^*,M_{z_2}^*,\cdots,M_{z_m}^*)$ of an $m$-tuple of multiplication operators by coordinate functions on a Hilbert
space $\mathcal{H}$ of holomorphic functions on $\Omega^*=\{w\in \mathbb{C}^m:\bar{w}\in \Omega\}$ with reproducing kernel $K$ due to Curto and Salinas in \cite{CS2} and
$\ker(\mathbf{M}_z^*-w)=\{K(\cdot, \bar{w})\xi, \xi\in \mathbb{C}^n\}$. If $K(z,w)=\sum\limits_{\mbox{\tiny$\begin{array}{c}
 \alpha\in{\mathbf{Z}}_{+}^{m}\end{array}$}}\rho(\alpha)z^\alpha w^\alpha$ for some $\rho(\alpha)>0$, then an orthonormal basis of $\mathcal{H}$ can be determined as
$\{\mathbf{e}_{\alpha}(z):\mathbf{e}_{\alpha}(z)=\sqrt{\rho(\alpha)}z^{\alpha}\}_{\alpha\in\mathbf{Z}_{+}^{m}}$. For any
$z=(z_1,z_2,\cdots,z_m)\in\Omega$ and $\alpha=(\alpha_1,\alpha_2,\cdots,\alpha_m)\in\mathbf{Z}_{+}^{m}$, we have
$$M_{z_i}\mathbf{e}_{\alpha}(z)
=\sqrt{\rho(\alpha)}z^{\alpha+e_i}
=\sqrt{\frac{\rho(\alpha)}{\rho(\alpha+e_i)}}\sqrt{\rho(\alpha+e_i)}z^{\alpha+e_i}
=\sqrt{\frac{\rho(\alpha)}{\rho(\alpha+e_i)}}\mathbf{e}_{\alpha+e_i}(z).$$
It follows that $\mathbf{T}$ can be realized as a commuting $m$-tuple of backward weighted shift up to unitary equivalence.

\subsection{$N$-hypercontractivity of commuting tuples}

In the case of a single operator, it can be seen from the model theorem \cite{N} and similarity theorems \cite{KT,DKT} that the $n$-hypercontraction of the operator is closely related to the similarity of the operator. Therefore, when considering the similarity of commuting tuples, it is necessary to introduce the following concept.

We will use the symbols in \cite{V} to define the $n$-hypercontractive of commuting tuples.
Let $\mathbf{T}=(T_{1},\cdots,T_{m})\in\mathcal{L}(\mathcal{H})^{m}$. Define the operator
$\mathbf{M}_{\mathbf{T}}:\mathcal{L}(\mathcal{H})\rightarrow\mathcal{L}(\mathcal{H})$
by $$\mathbf{M}_{\mathbf{T}}(X):=\sum\limits_{i=0}^m T_{i}^*XT_{i}$$
for any operator $X$ in $\mathcal{L}(\mathcal{H})$.

\begin{defn}\label{def}
A tuple $\mathbf{T}=(T_{1},\cdots,T_{m})\in\mathcal{L}(\mathcal{H})^{m}$ is called $n$-hypercontraction, if
$\triangle_{\mathbf{T}}^{(k)}:=(I-\mathbf{M}_{\mathbf{T}})^{k}(I)\geq 0$
for the identity $I$ of $\mathcal{H}$ and $1 \leq k \leq n$.  The special case of $1$-hypercontraction corresponds to the usual (row) contraction.
\end{defn}

Note that
$\mathbf{M}_{\mathbf{T}}^{k}(X)=\sum\limits_{\alpha\in \mathbf{Z}_{+}^{m}\atop|\alpha|=k}\frac{k!}{\alpha!} \mathbf{T}^{*\alpha}X\mathbf{T}^{\alpha}$ for any nonnegative integer $k,$
then we have
$$\triangle_{\mathbf{T}}^{(k)}=\sum\limits_{j=0}^k (-1)^j{k \choose j}\mathbf{M}_{\mathbf{T}}^{j}(I)=\sum \limits_{|\alpha|\leq k}(-1)^{|\alpha|}\frac{k!}{\alpha!(k-|\alpha|)!}\mathbf{T}^{*\alpha}\mathbf{T}^{\alpha}.$$
Let $\mathbb{B}^{m}$ be the open unit ball $\{w : |w|<1\}$ in $\mathbb{C}^{m}$. The space $H_{n}^{2}$ is a reproducing kernel Hilbert space with kernel function
$$K(z,w)=\frac{1}{(1-\langle z, w\rangle)^{n}}=\sum \limits_{\alpha\in \mathbf{Z}_{+}^{m}}\frac{(n+|\alpha|-1)!}{\alpha!(n-1)!}z^{\alpha}\bar{w}^{\alpha},\quad z,w\in(\mathbb{B}^{m})^{*},$$
where $\langle z, w\rangle=z_{1}\overline{w}_{1}+\cdots+z_{m}\overline{w}_{m}.$ Letting
$\rho_{n}(\alpha)=\frac{(n+|\alpha|-1)!}{\alpha!(n-1)!},$
we obtain an orthonormal basis of space $H_{n}^{2}$ as $\{\mathbf{e}_{\alpha}(z):\mathbf{e}_{\alpha}(z)=\sqrt{\rho_{n}(\alpha)}z^{\alpha}\}_{\alpha\in\mathbf{Z}_{+}^{m}}$.
In particular, when $n = 1$, it is the Drury-Arveson space $H^{2}$.
Obviously, the adjoint of the multiplication operators by coordinate functions on $H_{n}^{2}$ is $n$-hypercontractive.

\subsection{Plurisubharmonic function}
The following are some basics of plurisubharmonic functions, which will be used to describe the similarity of some $m$-tuples in $\mathcal{B}_{1}^{m}(\mathbb{B}^{m})$.
The space $\mathcal{C}^{2}$ consists of complex functions whose $2$th-order partial derivatives are continuous.

\begin{defn}\cite{LAA}
Let $\Omega$ be a bounded domain of $\mathbb{C}^{m}, m>1$. A function $u\in \mathcal{C}^{2}(\Omega)$ is said to be pluriharmonic if it satisfies the $m^{2}$ differential equations
$\frac{\partial ^{2}u}{\partial w_{i}\partial\bar{w}_{j}}=0$ for $1\leq i,j\leq m.$
\end{defn}

\begin{defn}\cite{PL,KO}
A real-valued function $u:\Omega\rightarrow \mathbf{R}\cup\{-\infty\}$ $(u\not\equiv-\infty)$ is plurisubharmonic if it satisfies the following conditions:
\begin{itemize}
  \item [(1)] $u(z)$ is upper-semicontinuous on $\Omega$;
  \item [(2)] For any arbitrary $z_{0}\in \Omega$ and some $z_{1}\in \mathbb{C}^{m}$ determined by $z_{0}$, $u(z_{0}+\lambda z_{1})$ is subharmonic with respect to $\lambda\in\mathbb{C}$.
\end{itemize}
\end{defn}
\begin{defn}\label{def2.6}
A non-negative function $g:\mathbb{R}^{m}\rightarrow[0,+\infty)$ is called log-plurisubharmonic, if the function $\log g$ is plurisubharmonic.
\end{defn}

\begin{lem}\label{lem2.8}
Let $f$ be a pluriharmonic function on $\Omega$. Then $\log|f|$ and $|f|^{p} (0<p<\infty)$ are plurisubharmonic function on $\Omega$.
\end{lem}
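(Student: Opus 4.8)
The plan is to reduce the $m$-dimensional assertion to the classical one-variable theory of subharmonic functions, using the characterisation of plurisubharmonicity by restriction to complex lines (condition (2) of the definition above). First note that both $\log|f|$ and $|f|^{p}$ are upper semicontinuous on $\Omega$, since $|f|$ is continuous and we adopt the usual conventions $\log 0=-\infty$ and $0^{p}=0$; this settles condition (1). For condition (2) I would fix a base point $z_{0}\in\Omega$ and a direction $z_{1}\in\mathbb{C}^{m}$ and set $g(\lambda):=f(z_{0}+\lambda z_{1})$ on the open connected set $G:=\{\lambda\in\mathbb{C}:z_{0}+\lambda z_{1}\in\Omega\}$. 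Since the modulus estimates rely on the holomorphic structure, $g$ is a holomorphic function of the single variable $\lambda$, so it remains only to establish the two classical one-variable facts that $\log|g|$ and $|g|^{p}$ are subharmonic on $G$.

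For $\log|g|$, observe that on $G\setminus g^{-1}(0)$ the function $g$ admits local holomorphic logarithms, so there $\log|g|=\mathrm{Re}(\log g)$ is harmonic; at an (isolated) zero $\lambda_{0}$ one has $\log|g|(\lambda_{0})=-\infty$, so the sub-mean-value inequality holds automatically, and upper semicontinuity then yields subharmonicity of $\log|g|$ on all of $G$. Hence $\log|f|$ is plurisubharmonic. To pass to $|f|^{p}$, I would write $|f|^{p}=\exp\bigl(p\log|f|\bigr)$ and invoke the standard principle that a nondecreasing convex function of a plurisubharmonic function is plurisubharmonic: the map $\phi(t)=e^{pt}$ is convex and increasing, and extending it by $\phi(-\infty)=0$ matches $|f|^{p}=0$ on $f^{-1}(0)$, so $\phi\circ\log|f|=|f|^{p}$ is plurisubharmonic. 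Equivalently, on each complex line one can compute directly where $g\neq 0$: writing $u=\log|g|$, which is harmonic there, one gets
\[\Delta e^{pu}=p^{2}e^{pu}\lvert\nabla u\rvert^{2}\ge 0,\]
so $|g|^{p}=e^{pu}$ is subharmonic off the zeros and extends across them by continuity.

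The only delicate point, and the main obstacle, is the behaviour across the zero set $f^{-1}(0)$, where $\log|f|=-\infty$ and the $\mathcal{C}^{2}$ computation of the complex Hessian is unavailable; away from this set $\log|f|$ is in fact pluriharmonic (its complex Hessian vanishes) and hence trivially plurisubharmonic, so all of the content lies in propagating the sub-mean-value inequality through the zeros. The reduction to complex lines disposes of this cleanly, because the zeros of a nonzero holomorphic $g$ are isolated and there $\log|g|=-\infty$ makes the inequality automatic, while if $g\equiv 0$ on a line the statement is trivial. One could equally well invoke the removable-singularity theorem for plurisubharmonic functions that are locally bounded above to extend plurisubharmonicity of $\log|f|$ from $\Omega\setminus f^{-1}(0)$ across the zero set.
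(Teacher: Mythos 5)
The pivotal step of your reduction --- ``$g$ is a holomorphic function of the single variable $\lambda$'' --- is a genuine gap: pluriharmonicity of $f$ does not make its restrictions to complex lines holomorphic, only harmonic. By the paper's own definition, $f$ pluriharmonic means $\partial^{2}f/\partial w_{i}\partial\bar{w}_{j}=0$ for all $i,j$, which is satisfied for instance by $f(w)=w_{1}+\bar{w}_{1}=2\,\mathrm{Re}\,w_{1}$; its restriction to the line $\lambda\mapsto z_{0}+\lambda e_{1}$ is $g(\lambda)=2\,\mathrm{Re}\,\lambda+c$, not holomorphic, and for such $g$ both of your one-variable facts fail. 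Writing $\lambda=x+iy$, one has $\Delta\log|x|=-x^{-2}<0$ and $\Delta|x|^{p}=p(p-1)|x|^{p-2}<0$ for $0<p<1$, so for this pluriharmonic $f$ neither $\log|f|$ nor $|f|^{p}$ (with $0<p<1$) is plurisubharmonic. In other words, the lemma as literally stated is false, and the defect in your proof is exactly the silent upgrade of the hypothesis from ``pluriharmonic'' to ``holomorphic'': once $g$ can merely be written as $h+\bar{k}$ with $h,k$ holomorphic, the identity $\log|g|=\mathrm{Re}(\log g)$ and the isolation of zeros --- the two pillars of your argument --- are both unavailable.

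That said, everything after the flawed step is the correct classical proof of the true statement, namely that for \emph{holomorphic} $f$ the functions $\log|f|$ and $|f|^{p}$ $(0<p<\infty)$ are plurisubharmonic: restriction to lines, harmonicity of $\log|g|$ off the isolated zeros, automatic sub-mean-value inequality where the value is $-\infty$, and the convex-increasing composition $|f|^{p}=\exp(p\log|f|)$; your Laplacian computation $\Delta e^{pu}=p^{2}e^{pu}|\nabla u|^{2}$ for harmonic $u$ is also correct. The paper itself gives no proof of this lemma (it is stated as a known fact), so there is no argument to compare with at the level of technique; note, however, that the only place the paper uses it --- in the proof of Theorem 3.3 --- it is applied to the holomorphic function $\widehat{\gamma}/\gamma$, so the holomorphic case your argument actually establishes is precisely the case needed. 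To repair your write-up, either replace ``pluriharmonic'' by ``holomorphic'' in the hypothesis, or, if one insists on real-valued pluriharmonic $f$, weaken the conclusion to what survives (e.g.\ $|f|^{p}$ plurisubharmonic only for $p\geq1$, by convexity of $t\mapsto|t|^{p}$ composed with a harmonic restriction, while $\log|f|$ must be dropped altogether).
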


\section{The similarity of commuting tuples of weighted shifts}

In this section, we mainly investigate how to describe the similarity of commuting $m$-tuples of unilateral weighted shifts.
In \cite{SH}, Shields provided a necessary and sufficient condition for the similarity of unilateral weighted shift operators by
using the weight sequences. 
In the following, we give the generalization of Shields's result on commuting tuples of unilateral weighted shifts, and prove that the similarity invariants of certain $n$-hypercontrative tuples can be characterized by the difference of the curvature of operator tuples in Cowen-Douglas class.

Let $H$ be a separable Hilbert space, and $\{\mathbf{e}_{\alpha}\}_{\alpha\in \mathbf{Z}_{+}^{m}}$ be an orthonormal basis of the Hilbert space $\mathcal{H}=\ell^{2}(\mathbf{Z}_{+}^{m},H)$ composed of functions $f$ satisfying
$\Vert f \Vert^{2}=\sum\limits_{\alpha\in \mathbf{Z}_{+}^{m}}|f(\alpha)|^{2}<\infty.$
Let $\mathbf{T}=(T_{1},\cdots,T_{m})$ and $\mathbf{S}=(S_{1},\cdots,S_{m})$ be commuting $m$-tuples of unilateral weighted shift operators on $\mathcal{H}$, and their nonzero weight sequences are $\{\lambda_{\alpha}^{(1)},\cdots,\lambda_{\alpha}^{(m)}\}_{\alpha\in \mathbf{Z}_{+}^{m}}$ and $\{\tilde{\lambda}_{\alpha}^{(1)},\cdots,\tilde{\lambda}_{\alpha}^{(m)}\}_{\alpha\in \mathbf{Z}_{+}^{m}}$ respectively. That is,
$$T_{i}\mathbf{e}_{\alpha}=\lambda_{\alpha}^{(i)}\mathbf{e}_{\alpha+e_{i}}\quad\text{and}\quad
S_{i}\mathbf{e}_{\alpha}=\widetilde{\lambda}_{\alpha}^{(i)}\mathbf{e}_{\alpha+e_{i}},$$
where $1\leq i\leq m$ and $e_{i}=(0,\cdots,0,1,0,\cdots,0)\in \mathbf{Z}_{+}^{m}$ with $1$ on the $i$th position.

The following theorem is the generalization of Shields's result on commuting tuples of unilateral weighted shifts, which were given by S. Kumar and V. S. Pilidi in \cite{Kumar} and \cite{Pi}, respectively.

\begin{thm}\label{c4.2}
Let $\mathbf{T}=(T_{1},\cdots,T_{m}),\mathbf{S}=(S_{1},\cdots,S_{m})\in \mathcal{L}(H)^m$ be tuples of unilateral weighted shift operators, and their nonzero weight sequences are $\{\lambda_{\alpha}^{(1)},\cdots,\lambda_{\alpha}^{(m)}\}_{\alpha\in \mathbf{Z}_{+}^{m}}$ and $\{\tilde{\lambda}_{\alpha}^{(1)},\cdots,\tilde{\lambda}_{\alpha}^{(m)}\}_{\alpha\in \mathbf{Z}_{+}^{m}}$ respectively. Then $\mathbf{T}$ and $\mathbf{S}$ are similar if and only if there exist positive constants $C_1$ and $C_2$ such that
$$0<C_1\leq \Bigg |\frac{\prod\limits_{k=0}^{l}\lambda_{\alpha+ke_{i}}^{(i)}}{\prod\limits_{k=0}^{l}\tilde{\lambda}_{\alpha+ke_{i}}^{(i)}} \Bigg | \leq C_2$$
for any nonnegative integer $l$, $\alpha\in \mathbf{Z}_{+}^{m}$ and $1\leq i\leq m$.
\end{thm}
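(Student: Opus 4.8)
The plan is to reduce the existence of an arbitrary invertible intertwiner to the existence of a \emph{diagonal} one, and then to recognize the displayed weight condition as exactly the statement that the diagonal intertwiner is bounded with bounded inverse. Throughout I write $X\mathbf{e}_\alpha=\sum_\beta X_{\beta\alpha}\mathbf{e}_\beta$ for an invertible $X$ with $XT_i=S_iX$, and I use the partial order $\alpha\leq\beta$ on $\mathbf{Z}_+^m$ defined in Section~2.

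First I would settle the diagonal case. A diagonal operator $D$, $D\mathbf{e}_\alpha=d_\alpha\mathbf{e}_\alpha$, satisfies $DT_i=S_iD$ if and only if $d_{\alpha+e_i}/d_\alpha=\widetilde\lambda_\alpha^{(i)}/\lambda_\alpha^{(i)}$; normalizing $d_0=1$ and integrating these ratios along a monotone lattice path yields a well-defined $d_\alpha$, path-independence being guaranteed by the commutation identities $\lambda_\alpha^{(i)}\lambda_{\alpha+e_i}^{(j)}=\lambda_\alpha^{(j)}\lambda_{\alpha+e_j}^{(i)}$ and their analogue for $\widetilde\lambda$. One checks directly that the product in the statement equals $d_\alpha/d_{\alpha+(l+1)e_i}$, so the displayed two-sided bound is precisely the assertion that $|d_\alpha/d_\beta|$ is bounded above and below whenever $\beta-\alpha$ is a positive multiple of a single $e_i$. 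Since $0\to\alpha$ can be joined by at most $m$ such single-direction segments, this is in turn equivalent to $\{|d_\alpha|\}$ being bounded above and bounded away from $0$, that is, to $D$ being bounded and boundedly invertible. This settles the ``if'' direction at once and reduces the ``only if'' direction to producing a diagonal similarity out of an arbitrary one.

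The structural heart is a triangularity lemma: any $X$ with $XT_i=S_iX$ is lower triangular for the order on $\mathbf{Z}_+^m$, i.e.\ $X_{\beta\alpha}=0$ unless $\alpha\leq\beta$. I would prove this from the matrix form of the intertwining relation,
$$\lambda_\alpha^{(i)}X_{\beta,\alpha+e_i}=\widetilde\lambda_{\beta-e_i}^{(i)}X_{\beta-e_i,\alpha},$$
with the convention that entries carrying a negative index vanish. Setting $\beta_i=0$ forces $X_{\beta,\alpha+e_i}=0$, since every weight is nonzero; and the same relation lets one slide both indices down by $e_i$ simultaneously, so iterating shows that a nonzero entry $X_{\beta\alpha}$ requires $\alpha_i\leq\beta_i$ for each $i$. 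The diagonal slide moreover gives $X_{\alpha\alpha}=d_\alpha X_{00}$ with exactly the $d_\alpha$ above. If $X_{00}=0$ then every diagonal entry vanishes, $X$ strictly raises the total degree $|\alpha|$, and hence $\mathbf{e}_0\perp\operatorname{ran}X$, contradicting surjectivity; therefore $X_{00}\neq0$.

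Finally I would extract the two bounds on $\{|d_\alpha|\}$. From $|X_{\alpha\alpha}|=|\langle X\mathbf{e}_\alpha,\mathbf{e}_\alpha\rangle|\leq\|X\|$ and $X_{\alpha\alpha}=d_\alpha X_{00}$ one gets $\sup_\alpha|d_\alpha|\leq\|X\|/|X_{00}|<\infty$. Since $X^{-1}$ satisfies $X^{-1}S_i=T_iX^{-1}$, it is an intertwiner in the reverse direction and so is lower triangular as well; comparing diagonal entries in $XX^{-1}=I$, where only the index $\gamma=\alpha$ survives because both factors are triangular, gives $(X^{-1})_{\alpha\alpha}=1/X_{\alpha\alpha}$, whence $|d_\alpha|=|X_{\alpha\alpha}|/|X_{00}|\geq 1/(|X_{00}|\,\|X^{-1}\|)>0$. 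Thus $\{|d_\alpha|\}$ is bounded above and below, the diagonal part of $X$ (rescaled by $X_{00}^{-1}$) is the desired diagonal similarity, and the weight condition follows from the first step. I expect the main obstacle to be the triangularity lemma together with the verification that the diagonal part is genuinely invertible in \emph{both} directions, since this is where the nonvanishing of the weights, the lattice order, and the symmetric use of $X^{-1}$ must be combined; by comparison the passage between the single-direction product bounds and the global boundedness of $\{|d_\alpha|\}$ is routine once the ``at most $m$ segments'' observation is in place.
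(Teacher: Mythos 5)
Your proof is correct, but there is nothing in the paper to compare it against: the paper states this theorem without proof, attributing it to S.~Kumar \cite{Kumar} and V.~S.~Pilidi \cite{Pi}, and immediately moves on to applications. Your argument supplies exactly the kind of proof those references (and Shields's original one-variable argument) use, and every step checks out. The matrix form of the intertwining relation, $\lambda_\alpha^{(i)}X_{\beta,\alpha+e_i}=\tilde\lambda_{\beta-e_i}^{(i)}X_{\beta-e_i,\alpha}$, is right; the sliding argument for lower triangularity is valid precisely because all weights are nonzero; path-independence of $d_\alpha$ follows from commutativity of each tuple; the identification of the displayed product with $d_\alpha/d_{\alpha+(l+1)e_i}$ is correct; and the symmetric use of $X^{-1}$ (itself triangular as an intertwiner in the reverse direction, with $(X^{-1})_{\alpha\alpha}=1/X_{\alpha\alpha}$ by the triangular diagonal computation) gives the lower bound $|d_\alpha|\geq 1/(|X_{00}|\,\|X^{-1}\|)$, while $|X_{\alpha\alpha}|\leq\|X\|$ gives the upper bound. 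Two cosmetic remarks only: the sentence about $X$ ``strictly raising the total degree'' is superfluous, since triangularity plus $X_{00}=0$ already forces $X_{0\alpha}=0$ for all $\alpha$, contradicting surjectivity; and in the ``if'' direction the constants should be tracked slightly more carefully (the path argument gives $|d_\alpha|$ between $\min\{1,C_2^{-m}\}$ and $\max\{1,C_1^{-m}\}$), but neither point affects correctness.
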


The result describes the similarity of commuting $m$-tuples of unilateral weighted shifts from the perspective of weight sequence.
Next, we give an example to illustrate one of the applications of the theorem above to the similarity of $m$-tuples in $\mathcal{B}_{1}^{m}(\Omega).$
Before introducing this result, we need to prove the following lemma.

\begin{lem}\label{lem5.2}
Let $\beta=(\beta_{1},\beta_{2},\cdots,\beta_{m})\in \mathbf{Z}_{+}^{m}$ and $i$ be a nonnegative integer. If $i\leq|\beta|$, then
$$\sum\limits_{\alpha\leq\beta \atop|\alpha|=i}\frac{|\alpha|!\beta!(|\beta-\alpha|)!}{\alpha!|\beta|!(\beta-\alpha)!}=1.$$
\end{lem}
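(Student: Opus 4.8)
The plan is to recognize the summand as a normalized product of binomial coefficients and to reduce the claim to the Chu--Vandermonde convolution identity. First I would observe that on the index set of the sum one has $|\alpha|=i$, and therefore $|\beta-\alpha|=|\beta|-i$, so the factorials $|\alpha|!=i!$ and $(|\beta-\alpha|)!=(|\beta|-i)!$ together with the factor $1/|\beta|!$ are constant over the sum and may be pulled outside. This rewrites the left-hand side as
$$\frac{i!\,(|\beta|-i)!}{|\beta|!}\sum_{\substack{\alpha\leq\beta \\ |\alpha|=i}}\frac{\beta!}{\alpha!\,(\beta-\alpha)!}.$$

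Next, using the factorization of the multi-index factorials $\beta!=\prod_{j=1}^m\beta_j!$, $\alpha!=\prod_{j=1}^m\alpha_j!$ and $(\beta-\alpha)!=\prod_{j=1}^m(\beta_j-\alpha_j)!$, the ratio inside the sum splits into a product of ordinary binomial coefficients,
$$\frac{\beta!}{\alpha!\,(\beta-\alpha)!}=\prod_{j=1}^m\binom{\beta_j}{\alpha_j}.$$
Thus the lemma becomes equivalent to the convolution identity $\sum_{\alpha\leq\beta,\,|\alpha|=i}\prod_{j=1}^m\binom{\beta_j}{\alpha_j}=\binom{|\beta|}{i}$, after which multiplying by the prefactor $\frac{i!\,(|\beta|-i)!}{|\beta|!}=\binom{|\beta|}{i}^{-1}$ produces the value $1$.

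To establish this convolution identity I would compare coefficients in a generating-function identity. Expanding each factor by the binomial theorem gives
$$\prod_{j=1}^m(1+x)^{\beta_j}=\prod_{j=1}^m\Big(\sum_{\alpha_j=0}^{\beta_j}\binom{\beta_j}{\alpha_j}x^{\alpha_j}\Big),$$
and the coefficient of $x^i$ on the right-hand side is exactly $\sum_{|\alpha|=i,\,\alpha\leq\beta}\prod_{j=1}^m\binom{\beta_j}{\alpha_j}$. On the other hand $\prod_{j=1}^m(1+x)^{\beta_j}=(1+x)^{|\beta|}$, whose coefficient of $x^i$ is $\binom{|\beta|}{i}$; here the hypothesis $i\leq|\beta|$ guarantees that this coefficient equals the stated binomial and that the index set of the sum is nonempty. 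Equating the two coefficients yields the required identity.

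There is no serious obstacle here: the argument is a clean reduction to Chu--Vandermonde, and the only care needed is bookkeeping with multi-indices, namely checking that the constraints $\alpha\leq\beta$ and $|\alpha|=i$ cut out precisely the range of indices that arises when the product $\prod_{j=1}^m\big(\sum_{\alpha_j=0}^{\beta_j}\cdots\big)$ is expanded and the coefficient of $x^i$ is collected. Alternatively one could prove the convolution identity by induction on $m$, with the base case $m=1$ trivial and the inductive step the classical two-variable Vandermonde identity; I would prefer the generating-function route, since it treats all $m$ uniformly and makes the role of the hypothesis $i\leq|\beta|$ transparent.
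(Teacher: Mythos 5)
Your proof is correct and follows essentially the same route as the paper's: both reduce the identity to $\sum_{\alpha\leq\beta,\,|\alpha|=i}\frac{\beta!}{\alpha!(\beta-\alpha)!}=\binom{|\beta|}{i}$ and establish this Vandermonde-type convolution by comparing the coefficient of $x^{i}$ in $(1+x)^{\beta_{1}}\cdots(1+x)^{\beta_{m}}=(1+x)^{|\beta|}$. Your write-up merely makes the bookkeeping (pulling out the constant factor $i!(|\beta|-i)!/|\beta|!$ and splitting the multi-index factorials into products of binomial coefficients) more explicit than the paper does.
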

\begin{proof}
In order to prove that
$\sum\limits_{\alpha\leq\beta \atop|\alpha|=i}\frac{|\alpha|!\beta!(|\beta-\alpha|)!}{\alpha!|\beta|!(\beta-\alpha)!}=\sum\limits_{\alpha\leq\beta \atop|\alpha|=i }\frac{i!\beta!(|\beta|-i)!}{\alpha!|\beta|!(\beta-\alpha)!}=1,$
we just need to verify that $\sum\limits_{\alpha\leq\beta \atop|\alpha|=i}\frac{\beta!}{\alpha!(\beta-\alpha)!}=\frac{|\beta|!}{i!(|\beta|-i)!}.$ Since
$(1+x)^{\beta_{1}}(1+x)^{\beta_{2}}\cdots(1+x)^{\beta_{m}}=(1+x)^{\beta_{1}+\beta_{2}+\cdots+\beta_{m}}=(1+x)^{|\beta|},$
by comparing the coefficients of the above $x^{i}, i\leq|\beta|$, we have that $\sum\limits_{\alpha\leq\beta \atop|\alpha|=i}{\beta_{1}\choose \alpha_{1}}{\beta_{2}\choose \alpha_{2}}\cdots{\beta_{m}\choose \alpha_{m}}={|\beta|\choose i}$. This completes the proof.
\end{proof}

Let $\widehat{\mathcal{H}}$ be the functional  Hilbert space with reproducing kernel $K(z,w)=\sum\limits_{i=0}^{\infty}a(i)(z_{1}\overline{w}_{1}+\cdots+z_{m}\overline{w}_{m})^{i}, a(i)>0$ for $w\in\mathbb{B}^{m}$, and let $\{\widehat{\mathbf{e}_{\alpha}}\}_{\alpha\in\mathbf{Z}_{+}^{m}}$ and $\{\mathbf{e}_{\alpha}\}_{\alpha\in\mathbf{Z}_{+}^{m}}$ be the orthonormal basis of spaces $\widehat{\mathcal{H}}$ and $H_{n}^{2}$, respectively. In the above notation, we have the following theorem:

\begin{thm}\label{thm5.2}\label{3}
Let $n$ be a positive integer, and let $\mathbf{T}=(T_{1},\cdots,T_{m}),\mathbf{S}^*=(S_{1}^{*},\cdots,S_{m}^{*})\in \mathcal{B}_{1}^{m}(\mathbb{B}^{m})$ be the adjoint of the multiplication operators by coordinate functions on
$\widehat{\mathcal{H}}$ and $H_{n}^{2}$, respectively. If $\mathbf{T}$ is $n$-hypercontractive, then $\mathbf{T}$ is similar to $\mathbf{S}^*$ if and only if there exists a bounded plurisubharmonic function $\psi$ such that
  $$\mathcal{K}_{\mathbf{S}^*}(w)-\mathcal{K}_\mathbf{T}(w)=\sum \limits_{i,j=1}^{m}\frac{\partial^{2}\psi(w)}{\partial w_{i}\partial \overline{w}_{j}}dw_{i}\wedge d\overline{w}_{j},\quad w\in\mathbb{B}^{m}.$$
\end{thm}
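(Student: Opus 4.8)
The plan is to exhibit a single explicit candidate for $\psi$ and to reduce everything to a comparison of weight sequences. Since both kernels are radial, $\widehat{K}(z,w)=\sum_i a(i)\langle z,w\rangle^i$ and $\widetilde{K}(z,w)=(1-\langle z,w\rangle)^{-n}$, the squared norms of the canonical non-vanishing sections $\gamma_\mathbf{T}(w)=\widehat{K}(\cdot,\bar w)$ and $\gamma_{\mathbf{S}^*}(w)=\widetilde{K}(\cdot,\bar w)$ are the radial functions $G_\mathbf{T}(w)=\sum_k a(k)t^k$ and $G_{\mathbf{S}^*}(w)=(1-t)^{-n}$, where $t=\langle w,w\rangle=|w|^2$. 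Putting $\psi_0:=\log\big(G_\mathbf{T}/G_{\mathbf{S}^*}\big)$ and using the rank-one curvature formula $\mathcal{K}=-\sum_{i,j}\frac{\partial^2\log\|\gamma\|^2}{\partial w_i\partial\bar w_j}\,dw_i\wedge d\bar w_j$, one verifies at once that $\sum_{i,j}\frac{\partial^2\psi_0}{\partial w_i\partial\bar w_j}\,dw_i\wedge d\bar w_j=\mathcal{K}_{\mathbf{S}^*}-\mathcal{K}_\mathbf{T}$. Hence $\psi_0$ always realises the required curvature identity, and the theorem reduces to the statement that $\psi_0$ can be chosen bounded and plurisubharmonic precisely when $\mathbf{T}\sim_s\mathbf{S}^*$.

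First I would convert similarity into a weight comparison. Taking adjoints gives $\mathbf{T}\sim_s\mathbf{S}^*$ iff the forward shifts $\mathbf{T}^*$ and $\mathbf{S}$ are similar, so Theorem~\ref{c4.2} applies to the weights $\sqrt{\rho(\alpha)/\rho(\alpha+e_i)}$ and $\sqrt{\rho_n(\alpha)/\rho_n(\alpha+e_i)}$. Writing $\phi(\alpha):=\rho(\alpha)/\rho_n(\alpha)$, which for these radial kernels equals $\Phi(|\alpha|)$ with $\Phi(k)=a(k)/b(k)$ and $b(k)=\binom{n+k-1}{k}$, the products in Theorem~\ref{c4.2} telescope and the inequalities collapse to $C_1^2\le\phi(\alpha)/\phi(\alpha+se_i)\le C_2^2$ for all $\alpha,i,s$. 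Chaining one coordinate direction at a time from the origin—each single-direction factor lying in $[C_1^2,C_2^2]$ no matter how large the jump—upgrades this to two-sided bounds on $\Phi$ itself. Thus $\mathbf{T}\sim_s\mathbf{S}^*$ iff $0<\inf_k\Phi(k)\le\sup_k\Phi(k)<\infty$.

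The $n$-hypercontractivity of $\mathbf{T}$ then enters twice. The computation underlying Theorem~\ref{thm2}, specialised to the radial kernel, forces $a(k)/a(k-1)\ge(n+k-1)/k=b(k)/b(k-1)$, i.e. $\Phi(k)\ge\Phi(k-1)$; so $\Phi$ is nondecreasing, automatically bounded below by $\Phi(0)=a(0)>0$, and $\sup_k\Phi(k)=\lim_{t\to1^-}G_\mathbf{T}/G_{\mathbf{S}^*}$ by an Abelian argument applied to the average $G_\mathbf{T}/G_{\mathbf{S}^*}=\big(\sum_k\Phi(k)b(k)t^k\big)/\big(\sum_k b(k)t^k\big)$, whose total weight tends to $\infty$ as $t\to1^-$. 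Consequently $\Phi$ is two-sidedly bounded iff $\psi_0$ is bounded. Secondly, $n$-hypercontractivity yields the curvature inequality $\mathcal{K}_\mathbf{T}\le\mathcal{K}_{\mathbf{S}^*}$, that is $\partial\bar\partial\psi_0\ge0$, so $\psi_0$ is plurisubharmonic. The forward implication is now immediate: if $\mathbf{T}\sim_s\mathbf{S}^*$ then $\Phi$ is two-sidedly bounded, so $\psi_0$ is bounded, and it is plurisubharmonic, whence $\psi=\psi_0$ works. For the converse, any bounded plurisubharmonic $\psi$ with the stated identity differs from $\psi_0$ by a pluriharmonic function; averaging $\psi$ over the torus $w\mapsto(e^{i\theta_1}w_1,\dots,e^{i\theta_m}w_m)$, under which $\psi_0$ is invariant and the pluriharmonic part integrates to a constant, shows $\psi_0$ is bounded, hence $\Phi$ is two-sidedly bounded and $\mathbf{T}\sim_s\mathbf{S}^*$.

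I expect the curvature inequality $\mathcal{K}_\mathbf{T}\le\mathcal{K}_{\mathbf{S}^*}$ to be the main obstacle. Monotonicity of $\Phi$ only yields the weaker eigenvalue condition $F'\ge0$ for the radial complex Hessian of $\psi_0$; positivity of the full Hessian requires the entire hierarchy $\triangle_\mathbf{T}^{(1)},\dots,\triangle_\mathbf{T}^{(n)}\ge0$, so the genuine strength of $n$-hypercontractivity, not merely a single necessary inequality, must be used here, adapting the curvature-domination results of \cite{M,MS1,MS2}. A secondary point is regularity: I should note that $\partial\bar\partial(\psi-\psi_0)=0$ together with the real-analyticity of $\psi_0$ forces $\psi-\psi_0$ to be genuinely pluriharmonic and smooth, which legitimises the torus-averaging step.
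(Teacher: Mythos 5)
Your converse direction (bounded psh $\psi$ $\Rightarrow$ similarity) is complete and is genuinely cleaner than the paper's: where the paper establishes $a(k)\asymp k^{n-1}$ by an inductive Claim on finite differences and then verifies the weight condition of Theorem \ref{c4.2} through a multi-case estimate ($|\alpha|\geq N_1$, $|\alpha|<N_1$, $l\geq N_2$, $l<N_2$), you get two-sided bounds on $\Phi(k)=a(k)\big/\tbinom{n+k-1}{k}$ at one stroke from the monotonicity $\Phi(k)\geq\Phi(k-1)$ (the radial specialization of the computation behind Theorem \ref{thm2}) combined with the Abelian-mean identity $\lim_{t\to1^-}G_\mathbf{T}/G_{\mathbf{S}^*}=\sup_k\Phi(k)$; your torus-averaging also handles the pluriharmonic ambiguity more transparently than the paper's maximum-modulus argument with the auxiliary function $\phi$.

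The genuine gap is exactly the step you flag and postpone: the forward direction needs $\psi_0=\log\bigl[(1-|w|^2)^n K_\mathbf{T}(w,w)\bigr]$ to be plurisubharmonic, i.e. $\mathcal{K}_\mathbf{T}\leq\mathcal{K}_{\mathbf{S}^*}$, and you only say you \emph{expect} this to follow by adapting \cite{M,MS1,MS2}; as written, the ``only if'' implication has no proof. Your instinct that hypercontractivity is indispensable here is correct --- similarity alone does not suffice: taking $a(k)=\tbinom{n+k-1}{k}\bigl(1+\tfrac12(-1)^k\bigr)$ gives $\Phi\in[\tfrac12,\tfrac32]$, hence $\mathbf{T}\sim_s\mathbf{S}^*$ by Theorem \ref{c4.2}, yet $(1-t)^n G_\mathbf{T}(t)=1+\tfrac12\bigl(\tfrac{1-t}{1+t}\bigr)^n$ is strictly decreasing in $t=|w|^2$, so $\psi_0$ fails to be psh (a radial psh function on $\mathbb{B}^m$, $m\geq2$, must be nondecreasing in $t$, and for $m=1$ one checks $(t\psi_0')'<0$ near the origin). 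Incidentally, this example also shows that the paper's own justification of this step --- Lemma \ref{lem2.8} applied to the ``holomorphic ratio'' $\widehat{\gamma}/\gamma$ of sections living in different Hilbert spaces, with no use of hypercontractivity --- cannot be right as stated, so the point you identified is a real one. The good news is that the gap closes with tools already in your write-up: the diagonal computation underlying Theorem \ref{thm2}, carried out for \emph{all} $\beta$ rather than only $|\beta|\geq n$, shows that the $l$-th Taylor coefficient $c_l$ of $(1-t)^nG_\mathbf{T}(t)$ satisfies $c_l=a(l)\,\langle\triangle^{(n)}_{\mathbf{T}}\widehat{\mathbf{e}}_\beta,\widehat{\mathbf{e}}_\beta\rangle\geq0$ for any $|\beta|=l$. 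Consequently $(1-|w|^2)^nK_\mathbf{T}(w,w)=\sum_l c_l|w|^{2l}=\Vert\eta(w)\Vert^2$, where $\eta(w)=\bigl(\sqrt{c_{|\alpha|}\,|\alpha|!/\alpha!}\;w^\alpha\bigr)_{\alpha}$ is a holomorphic $\ell^2$-valued map, and $\psi_0=\log\Vert\eta\Vert^2$ is psh, being a continuous supremum of the psh functions $2\log|\langle\eta(\cdot),u\rangle|$ over unit vectors $u$. Until you insert this (or an equivalent curvature-domination argument), your forward direction is incomplete.
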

\begin{proof}
Since $\mathbf{T},\mathbf{S}^*\in \mathcal{B}_{1}^{m}(\mathbb{B}^{m})$, there are non-vanishing holomorphic sections $\widehat{\gamma}$ and $\gamma$ of $E_{\mathbf{T}}$ and $E_{\mathbf{S}^*}$, such that
\begin{equation}\label{056}
h_{\mathbf{T}}(w)=\Vert\widehat{\gamma}(w)\Vert^{2}=\sum\limits_{\alpha\in\mathbf{Z}_{+}^{m}}\widehat{\rho}(\alpha)w^{\alpha}\overline{w}^{\alpha}\quad\text{and}\quad h_{\mathbf{S}^*}(w)=\Vert \gamma(w)\Vert^{2}=\sum\limits_{\alpha\in\mathbf{Z}_{+}^{m}}\rho(\alpha)w^{\alpha}\overline{w}^{\alpha},
\end{equation}
where $\widehat{\rho}(\alpha)=a(|\alpha|)\frac{|\alpha|!}{\alpha!}$ and $\rho(\alpha)=\frac{(n+|\alpha|-1)!}{\alpha!(n-1)!}$.
From the relation between reproducing kernel and weight sequence, we have
$$T^{*}_{i}\widehat{\mathbf{e}}_{\alpha}=\sqrt{\frac{\widehat{\rho}(\alpha)}{\widehat{\rho}(\alpha+e_{i})}}\widehat{\mathbf{e}}_{\alpha+e_{i}},\quad
\quad T_{i}\widehat{\mathbf{e}}_{\alpha}=\sqrt{\frac{\widehat{\rho}(\alpha-e_{i})}{\widehat{\rho}(\alpha)}}\widehat{\mathbf{e}}_{\alpha-e_{i}},$$
and
$$S_{i}\mathbf{e}_{\alpha}=\sqrt{\frac{\rho(\alpha)}{\rho(\alpha+e_{i})}}\mathbf{e}_{\alpha+e_{i}},\quad
\quad S^{*}_{i}\mathbf{e}_{\alpha}=\sqrt{\frac{\rho(\alpha-e_{i})}{\rho(\alpha)}}\mathbf{e}_{\alpha-e_{i}}$$
where $1\leq i\leq m$ and $e_{i}=(0,\cdots,0,1,0,\cdots,0)\in\mathbf{Z}_{+}^{m}$ with $1$ on the $i$th position.

On the one hand, if $\mathbf{T}$ and $\mathbf{S}^{*}$ are similar. By Theorem \ref{c4.2}, there are positive constants  $C_{1}$ and $C_{2}$ such that
$$0<C_{1}\leq\frac{\prod\limits_{k=0}^{l}\sqrt{\frac{\rho(\alpha+ke_{i})}{\rho(\alpha+(k+1)e_{i})}}}{\prod\limits_{k=0}^{l}\sqrt{\frac{\widehat{\rho}(\alpha+ke_{i})}{\widehat{\rho}(\alpha+(k+1)e_{i})}}}
=\frac{\sqrt{\frac{\rho(\alpha)}{ \rho(\alpha+(l+1)e_{i})}}}{\sqrt{\frac{\widehat{\rho}(\alpha)}{\widehat{\rho}(\alpha+(l+1)e_{i})}}}\leq C_{2}$$
for any nonnegative integers $l$, $1\leq i\leq m$ and $\alpha\in\mathbf{Z}_{+}^{m}$.
Then for arbitrary and fixed $\alpha=(\alpha_{1},\alpha_{2},\cdots,\alpha_{m})\in \mathbf{Z}_{+}^{m}$, if $\alpha_{i}\neq 0, 1\leq i\leq m$, we obtain
$$\begin{cases}
0<C^{2}_{1}\leq\frac{\frac{\rho(\alpha-\alpha_{1}e_{1})}{\rho(\alpha)}}{\frac{\widehat{\rho}(\alpha-\alpha_{1}e_{1})}{\widehat{\rho}(\alpha)}}\leq C^{2}_{2},\\
0<C^{2}_{1}\leq\frac{\frac{\rho(\alpha-\alpha_{1}e_{1}-\alpha_{2}e_{2})}{\rho(\alpha-\alpha_{1}e_{1})}}{\frac{\widehat{\rho}(\alpha-\alpha_{1}e_{1}-\alpha_{2}e_{2})}{\widehat{\rho}(\alpha-\alpha_{1}e_{1})}}\leq C^{2}_{2},\\
\cdots\cdots\\
0<C^{2}_{1}\leq\frac{\frac{\rho(\theta)}{\rho(\alpha-\alpha_{1}e_{1}-\cdots-\alpha_{m-1}e_{m-1})}}{\frac{\widehat{\rho}(\theta)}{\widehat{\rho}(\alpha-\alpha_{1}e_{1}-\cdots-\alpha_{m-1}e_{m-1})}}\leq C^{2}_{2}.
\end{cases}$$
Further, we have
$0<C^{2m}_{1}\leq\frac{\rho(\theta)}{\widehat{\rho}(\theta)}\frac{\widehat{\rho}(\alpha)}{\rho(\alpha)}\leq C^{2m}_{2}$. Otherwise, there is $1\leq p< m$ such that $0<C^{2p}_{1}\leq\frac{\rho(\theta)}{\widehat{\rho}(\theta)}\frac{\widehat{\rho}(\alpha)}{\rho(\alpha)}\leq C^{2p}_{2}$.
For convenience, we write $r$ as $m$ or $p$ in the above two cases.
From $\rho(\theta)=\frac{(n+|\theta|-1)!}{\theta!(n-1)!}=1$, we obtain
$$0<C^{2r}_{1}\widehat{\rho}(\theta)\rho(\alpha)\leq \widehat{\rho}(\alpha)\leq C^{2r}_{2}\widehat{\rho}(\theta)\rho(\alpha),\quad \alpha\in \mathbf{Z}_{+}^{m}.$$
So
$$0<C^{2r}_{1}\widehat{\rho}(\theta)\sum\limits_{\alpha\in\mathbf{Z}_{+}^{m}}\rho(\alpha)w^{\alpha}\overline{w}^{\alpha}\leq \sum\limits_{\alpha\in\mathbf{Z}_{+}^{m}}\widehat{\rho}(\alpha)w^{\alpha}\overline{w}^{\alpha}\leq C^{2r}_{2}\widehat{\rho}(\theta)\sum\limits_{\alpha\in\mathbf{Z}_{+}^{m}}\rho(\alpha)w^{\alpha}\overline{w}^{\alpha}.$$
Let $M_1 := C^{2r}_{1}\widehat{\rho}(\theta)$ and $M_2 := C^{2r}_{2}\widehat{\rho}(\theta)$. By (\ref{056}), we have
$$0<M_1\leq\frac{h_{\mathbf{T}}(w)}{h_{\mathbf{S}^{*}}(w)}=\frac{\Vert\widehat{\gamma}(w)\Vert^{2}}{\Vert\gamma(w)\Vert^{2}}\leq M_2$$
and
$\log M_1\leq\log\frac{\Vert\widehat{\gamma}(w)\Vert^{2}}{\Vert\gamma(w)\Vert^{2}}\leq\log M_2$ for $ w\in\mathbb{B}^{m}.$
This means that $\log\Arrowvert\frac{\widehat{\gamma}(w)}{\gamma(w)}\Arrowvert^{2}$ is a bounded function. From $\widehat{\gamma}(w)$ and $\gamma(w)$ are holomorphic,
we get $\frac{\widehat{\gamma}(w)}{\gamma(w)}$ is also holomorphic. By Lemma \ref{lem2.8}, we know that $\log\Vert\frac{\widehat{\gamma}(w)}{\gamma(w)}\Vert$ is a bounded plurisubharmonic function,
and so is $\log\Vert\frac{\widehat{\gamma}(w)}{\gamma(w)}\Vert^{2}=2\log\Vert\frac{\widehat{\gamma}(w)}{\gamma(w)}\Vert$. Thus
\begin{eqnarray}\nonumber
\mathcal{K}_{\mathbf{S}^*}(w)-\mathcal{K}_\mathbf{T}(w)
&=&-\sum \limits_{i,j=1}^{m}\frac{\partial^{2}\log\Vert\gamma(w)\Vert^{2}}{\partial w_{i}\partial \overline{w}_{j}}dw_{i}\wedge d\overline{w}_{j}+
\sum \limits_{i,j=1}^{m}\frac{\partial^{2}\log\Vert\widehat{\gamma}(w)\Vert^{2}}{\partial w_{i}\partial \overline{w}_{j}}dw_{i}\wedge d\overline{w}_{j}\\[4pt]\nonumber
&=&\sum \limits_{i,j=1}^{m}\frac{\partial^{2}\log\Vert\frac{\widehat{\gamma}(w)}{\gamma(w)}\Vert^{2}}{\partial w_{i}\partial \overline{w}_{j}}dw_{i}\wedge d\overline{w}_{j},\nonumber
\end{eqnarray}
where $\log\Vert\frac{\widehat{\gamma}(w)}{\gamma(w)}\Vert^{2}$ is a bounded plurisubharmonic function.

On the other hand, if there is a bounded plurisubharmonic function $\psi$ such that
  $$\mathcal{K}_{\mathbf{S}^*}(w)-\mathcal{K}_\mathbf{T}(w)=\sum \limits_{i,j=1}^{m}\frac{\partial^{2}\psi(w)}{\partial w_{i}\partial \overline{w}_{j}}dw_{i}\wedge d\overline{w}_{j},\quad w\in\mathbb{B}^{m}.$$
It follows that
$$\frac{\partial^{2}\psi(w)}{\partial w_{i}\partial \overline{w}_{j}}=\frac{\partial^{2}\log\Vert\frac{\widehat{\gamma}(w)}{\gamma(w)}\Vert^{2}}{\partial w_{i}\partial \overline{w}_{j}},\quad $$
that is, $\frac{\partial^{2}}{\partial w_{i}\partial \overline{w}_{j}}\log\frac{\Vert\frac{\widehat{\gamma}(w)}{\gamma(w)}\Vert^{2}}{e^{\psi(w)}}=0,1\leq i,j\leq m.$ Therefore, there exists a nonzero holomorphic function $\phi$ such that $\frac{\Vert\frac{\widehat{\gamma}(w)}{\gamma(w)}\Vert^{2}}{e^{|\psi(w)|}}=|\phi(w)|^{2}.$
and $\Vert\frac{\widehat{\gamma}(w)}{\phi(w)\gamma(w)}\Vert^{2}=e^{|\psi(w)|}$. Since $\psi$ is bounded, there are constants $C_{3}$ and $C_{4}$ such that
$C_{3}\leq|\psi(w)|\leq C_{4}.$
Set $\widetilde{m}:=e^{C_{3}}$ and $\widetilde{M}:=e^{C_{4}}$, then
\begin{equation}\label{051}
0<\widetilde{m}\leq\frac{\Vert\widehat{\gamma}(w)\Vert^{2}}{\Vert\phi(w)\gamma(w)\Vert^{2}}\leq\widetilde{ M},\quad w\in\mathbb{B}^{m}.
\end{equation}
Note that for all $\alpha\in \mathbf{Z}_{+}^{m}$,
\begin{equation}\label{058}\nonumber
\mathbf{T}^{*\alpha}\mathbf{T}^{\alpha}\mathbf{e}_{\beta}=
\begin{cases}
\widehat{\rho}(\beta)^{-1}\widehat{\rho}(\beta-\alpha)\widehat{\mathbf{e}}_{\beta}\quad \text{if}\,\,\alpha\leq\beta,\\
0\quad\quad\quad\quad\quad\quad\quad\quad\,\, \text{if}\,\,\alpha\nleq\beta.
\end{cases}
\end{equation}
Since
$\mathbf{T}=(T_{1},\cdots,T_{m})$ is an $n$-hypercontraction and $\widehat{\rho}(\alpha)=a(|\alpha|)\frac{|\alpha|!}{\alpha!}$, by Lemma \ref{lem5.2}, we have for $1\leq k\leq n$ and $\beta\in\mathbf{Z}_{+}^{m}$,
\begin{eqnarray}\nonumber
\langle\triangle_{\mathbf{T}}^{(k)}\widehat{\mathbf{e}}_{\beta},\widehat{\mathbf{e}}_{\beta}\rangle&=&\sum \limits_{|\alpha|\leq k}(-1)^{|\alpha|}\frac{k!}{\alpha!(k-|\alpha|)!}\langle\mathbf{T}^{\alpha}\widehat{\mathbf{e}}_{\beta},\mathbf{T}^{\alpha}\widehat{\mathbf{e}}_{\beta}\rangle\\[4pt]\nonumber
&=&\sum \limits_{|\alpha|\leq k \atop \alpha\leq\beta}(-1)^{|\alpha|}\frac{k!}{\alpha!(k-|\alpha|)!}\frac{\widehat{\rho}(\beta-\alpha)}{\widehat{\rho}(\beta)}\\[4pt]\nonumber
&=&\sum \limits_{|\alpha|\leq k \atop \alpha\leq\beta}(-1)^{|\alpha|}\frac{k!}{\alpha!(k-|\alpha|)!}\frac{a(|\beta-\alpha|)|\beta-\alpha|!}{(\beta-\alpha)!}\frac{\beta!}{a(|\beta|)|\beta|!}\\[4pt]\nonumber
&=&\frac{1}{a(|\beta|)}\sum \limits_{|\alpha|\leq k\atop \alpha\leq\beta}(-1)^{|\alpha|}\frac{k!}{|\alpha|!(k-|\alpha|)!}a(|\beta|-|\alpha|)\frac{|\beta-\alpha|!}{(\beta-\alpha)!}\frac{|\alpha|!\beta!}{\alpha!|\beta|!}\\[4pt]\nonumber
&=&\frac{1}{a(|\beta|)}\sum \limits_{i=0}^{k}[(-1)^{i}\frac{k!}{i!(k-i)!}a(|\beta|-i)\sum\limits_{|\alpha|=i \atop \alpha\leq\beta}\frac{(|\beta|-i)!i!\beta!}{(\beta-\alpha)!\alpha!|\beta|!}]\\[4pt]\nonumber
&=&\frac{1}{a(|\beta|)}\sum \limits_{i=0}^{k}(-1)^{i}\frac{k!}{i!(k-i)!}a(|\beta|-i)\\[4pt]\nonumber
&\geq&0,\nonumber
\end{eqnarray}
then
\begin{equation}\label{052}
\sum \limits_{i=0}^{k}(-1)^{i}\frac{k!}{i!(k-i)!}a(|\beta|-i)=\sum \limits_{i=0}^{k}(-1)^{i}{k \choose i}a(|\beta|-i)\geq0,\,\,|\beta|\geq k.
\end{equation}
Note that
\begin{eqnarray}\nonumber
&&\frac{h_{\mathbf{T}}(w)}{h_{\mathbf{S}^{*}}(w)}\\[4pt]\nonumber
&=&(1-|w|^{2})^{n}\sum\limits_{i=0}^{\infty}a(i)(|w|^{2})^{i}\\[4pt]\nonumber
&=&a(0)+\sum\limits_{j=0}^{1}(-1)^{j}{n \choose j}a(1-j)|w|^{2}+\cdots
+\sum\limits_{j=0}^{l}(-1)^{j}{n \choose j}a(l-j)|w|^{2l}+\cdots+\sum\limits_{j=0}^{n}(-1)^{j}{n \choose j}a(n-j)|w|^{2n}\\[4pt]\nonumber
& &+\sum\limits_{j=0}^{n}(-1)^{j}{n \choose j}a(n+1-j)|w|^{2(n+1)}+\cdots+\sum\limits_{j=0}^{n}(-1)^{j}{n \choose j}a(k-j)|w|^{2k}+\cdots.\nonumber
\end{eqnarray}
From the known fact ${n \choose i}={n-1 \choose i}+{n-1 \choose i-1}$, we obtain
\begin{equation}\label{053}
\sum\limits_{i=0}^{m}(-1)^{i}{n \choose i}=(-1)^{m}{n-1 \choose m}.
\end{equation}
By equations (\ref{052}) and (\ref{053}), we have that
\begin{eqnarray}\label{054}
&&\max\limits_{|w|\leq1}\frac{h_{\mathbf{T}}(w)}{h_{\mathbf{S}^{*}}(w)}\\[4pt]\nonumber
&=&\sum\limits_{l=0}^{n}[\sum\limits_{j=0}^{l}(-1)^{j}{n \choose j}a(l-j)]+\sum\limits_{l=n+1}^{\infty}[\sum\limits_{j=0}^{n}(-1)^{j}{n \choose j}a(l-j)]\\[4pt]\nonumber
&=&\lim\limits_{k\rightarrow\infty}[a(k)+(\sum\limits_{i=0}^{1}(-1)^{i}{n \choose i})a(k-1)+\cdots+(\sum\limits_{i=0}^{n-1}(-1)^{i}{n \choose i})a(k-n+1)]\\[4pt]\nonumber
&=&\lim\limits_{k\rightarrow\infty}\sum\limits_{i=0}^{n-1}(-1)^{i}{n-1 \choose i}a(k-i).\nonumber
\end{eqnarray}
From (\ref{051}) and (\ref{054}), there are positive constants $m':=a(0)$ and $M'$ such that
\begin{equation}\label{055}
0<m'\leq\lim\limits_{k\rightarrow\infty}\sum\limits_{i=0}^{n-1}(-1)^{i}{n-1 \choose i}a(k-i)\leq M',
\end{equation}
and then $$0<m'\leq\frac{h_{\mathbf{T}}(w)}{h_{\mathbf{S}^{*}}(w)}=\frac{\Vert\widehat{\gamma}(w)\Vert^{2}}{\Vert\gamma(w)\Vert^{2}}\leq M',\quad w\in\mathbb{B}^{m}.$$ Otherwise, we have
$\frac{\Vert\widehat{\gamma}(w)\Vert^{2}}{\Vert\gamma(w)\Vert^{2}}\rightarrow \infty$ as $|w|\rightarrow 1,$ and from (\ref{051}) we know that $\frac{1}{\phi(w)}\rightarrow0$ as $|w|\rightarrow 1.$ By the maximum modulus theorem of holomorphic functions, we have that $\frac{1}{\phi(w)}=0$ for any $w\in\mathbb{B}^{m}$. This contradicts that $\phi$ is nonzero holomorphic.

{\bf Claim:} For any positive integer $n$, and any positive sequence $\{a(k)\}_{k\geq n-1}$. If $n$ and $\{a(k)\}_{k\geq n-1}$ satisfy inequality (\ref{055}), then there exists non-zero constants $c_{1}$ and $c_{2}$ such that $$0<c_{1}\leq\lim\limits_{k\rightarrow\infty}\frac{a(k)}{k^{n-1}}\leq c_{2},$$ denoted by $a(k)=O(k^{n-1})(k\rightarrow\infty)$.

In the case of $n=1$, it is obviously true.

When $n=2$, from (\ref{055}), we know that $m'\leq a(k)-a(k-1)\leq M'$. Furthermore, $$km'+a(0)\leq a(k)\leq kM'+a(0),$$
which means that $a(k)=O(k)(k\rightarrow\infty)$ holds.

Assume that the Claim is valid for $n=t$. That is, $a(k)=O(k^{t-1})(k\rightarrow\infty)$. Now we prove that $n=t+1$ is also true.
Set $u(k):=
\begin{cases}
a(k)-a(k-1)\quad k\geq1,\\
a(0)\quad\quad\quad\quad\quad\quad k=0.
\end{cases}$
Since $\mathbf{T}$ is a contraction, the positive sequence $\{a(k)\}_{k\geq0}$ is increasing, $u(k)\geq0$ for any nonnegative integer $k$. By (\ref{055}) and ${t \choose i}={t-1 \choose i}+{t-1 \choose i-1}$, we have
\begin{equation}\nonumber
0<m'\leq\lim\limits_{k\rightarrow\infty}\sum\limits_{i=0}^{t}(-1)^{i}{t \choose i}a(k-i)=\lim\limits_{k\rightarrow\infty}\sum\limits_{i=0}^{t-1}(-1)^{i}{t-1 \choose i}u(k-i)\leq M'.
\end{equation}
Therefore, $u(k)=O(k^{t-1})(k\rightarrow\infty)$. That is, there exist $m_{1}$ and $M_{1}$ such that
$$0<m_{1}k^{t-1}\leq u(k)= a(k)-a(k-1)\leq M_{1}k^{t-1}.$$ So
$$m_{1}\sum\limits_{i=1}^{k}i^{t-1}+a(0)\leq a(k)\leq M_{1}\sum\limits_{i=1}^{k}i^{t-1}+a(0).$$
Set $l_{t-1}:=\sum\limits_{i=1}^{k}i^{t-1}.$
It is well-known that
$l_{t-1}=\frac{1}{t}\Big[(1+k)^{t}-(1+k)-(\sum\limits_{i=2}^{t-1}{t \choose i}l_{t-i})\Big].$
Then $$\sum\limits_{i=1}^{k}i^{t-1}=O(k^{t})(k\rightarrow\infty)\quad\text{and}\quad a(k)=O(k^{t})(k\rightarrow\infty).$$ This proves that the Claim is true.

By the Claim and equations (\ref{055}), $\widehat{\rho}(\alpha)=a(|\alpha|)\frac{|\alpha|!}{\alpha!}$, we have  $\widehat{\rho}(\alpha)=O(|\alpha|^{n-1}\frac{|\alpha|!}{\alpha!})(|\alpha|\rightarrow\infty).$
Thus, for any $\alpha\in \mathbf{Z}_{+}^{m}$, $1\leq i\leq m$ and any positive integer $l$,
\begin{eqnarray}\nonumber
\frac{\prod\limits_{k=0}^{l}\sqrt{\frac{\rho(\alpha+ke_{i})}{\rho(\alpha+(k+1)e_{i})}}}{\prod\limits_{k=0}^{l}\sqrt{\frac{\widehat{\rho}(\alpha+ke_{i})}{\widehat{\rho}(\alpha+(k+1)e_{i})}}}&=
&\sqrt{\frac{\rho(\alpha)\widehat{\rho}(\alpha+(l+1)e_{i})}{\widehat{\rho}(\alpha)\rho(\alpha+(l+1)e_{i})}}\\[4pt]\nonumber
&=&O\Biggl(\sqrt{\frac{(n+|\alpha|-1)!}{(n+|\alpha|+l)!}\frac{|\alpha+(l+1)e_{i}|^{n-1}}{|\alpha|^{n-1}}\frac{|\alpha+(l+1)e_{i}|!}{|\alpha|!}}\Biggl)\quad |\alpha|\rightarrow\infty.\nonumber
\end{eqnarray}
Note that for any positive integer $l,$
$$\frac{(n+|\alpha|-1)!}{(n+|\alpha|+l)!}\frac{|\alpha+(l+1)e_{i}|^{n-1}}{|\alpha|^{n-1}}\frac{|\alpha+(l+1)e_{i}|!}{|\alpha|!}\longrightarrow 1\quad |\alpha|\rightarrow\infty.$$
Therefore, for any positive integer $l,$ there is an integer $N_{1}$, so that when $|\alpha|\geq N_{1}$,
$$\frac{1}{2}\leq\frac{(n+|\alpha|-1)!}{(n+|\alpha|+l)!}\frac{|\alpha+(l+1)e_{i}|^{n-1}}{|\alpha|^{n-1}}\frac{|\alpha+(l+1)e_{i}|!}{|\alpha|!}\leq\frac{3}{2}.$$
Then there are positive constants $m_{2}$ and $M_{2}$ such that
$$0<\sqrt{\frac{1}{2}}m_{2}\leq\frac{\prod\limits_{k=0}^{l}\sqrt{\frac{\rho(\alpha+ke_{i})}{\rho(\alpha+(k+1)e_{i})}}}{\prod\limits_{k=0}^{l}\sqrt{\frac{\widehat{\rho}(\alpha+ke_{i})}{\widehat{\rho}(\alpha+(k+1)e_{i})}}}\leq\sqrt{\frac{3}{2}}M_{2},\quad|\alpha|\geq N_{1},l\geq0.$$
When $|\alpha|< N_{1},$ we have
\begin{eqnarray}\nonumber
\frac{\prod\limits_{k=0}^{l}\sqrt{\frac{\rho(\alpha+ke_{i})}{\rho(\alpha+(k+1)e_{i})}}}{\prod\limits_{k=0}^{l}\sqrt{\frac{\widehat{\rho}(\alpha+ke_{i})}{\widehat{\rho}(\alpha+(k+1)e_{i})}}}&
=&O\Biggl(\sqrt{\frac{\rho(\alpha)}{\widehat{\rho}(\alpha)}}
\sqrt{\frac{(|\alpha|+l+1)^{n-1}(n-1)!}{(|\alpha|+l+2)(|\alpha|+l+3)\cdots(|\alpha|+l+n)}}\,\Biggl)\quad l\rightarrow\infty\nonumber
\end{eqnarray}
and $\frac{(|\alpha|+l+1)^{n-1}}{(|\alpha|+l+2)(|\alpha|+l+3)\cdots(|\alpha|+l+n)}=O(1) (l\rightarrow\infty).$
Let $$m_{3}:=\min\Biggl\{\sqrt{\frac{(n-1)!\rho(\alpha)}{\widehat{\rho}(\alpha)}}:|\alpha|< N_{1}, \alpha\in \mathbf{Z}_{+}^{m}\Biggl\}\text{ and } M_{3}:=\max\Biggl\{\sqrt{\frac{(n-1)!\rho(\alpha)}{\widehat{\rho}(\alpha)}}:|\alpha|< N_{1}, \alpha\in \mathbf{Z}_{+}^{m}\Biggl\}. $$
Then there are $\varepsilon_{0}>0$ and positive integer $N_{2}$ such that when $|\alpha|< N_{1}$ and $l\geq N_{2}$, there is
$$0<m_{3}(1-\varepsilon_{0})\leq\sqrt{\frac{\rho(\alpha)}{\widehat{\rho}(\alpha)}}
\sqrt{\frac{(|\alpha|+l+1)^{n-1}(n-1)!}{(|\alpha|+l+2)(|\alpha|+l+3)\cdots(|\alpha|+l+n)}}\leq M_{3}(1+\varepsilon_{0}).$$
Then there are positive constants $m_{4}$ and $M_{4}$ such that
$$0<m_{3}(1-\varepsilon_{0})m_{4}\leq\frac{\prod\limits_{k=0}^{l}\sqrt{\frac{\rho(\alpha+ke_{i})}{\rho(\alpha+(k+1)e_{i})}}}{\prod\limits_{k=0}^{l}\sqrt{\frac{\widehat{\rho}(\alpha+ke_{i})}{\widehat{\rho}(\alpha+(k+1)e_{i})}}}\leq M_{3}(1+\varepsilon_{0})M_{4},\quad|\alpha|< N_{1},l\geq N_{2}.$$
Set
$$m_{5}:=\min\Biggl\{\frac{\sqrt{\frac{\rho(\alpha)}{\rho(\alpha+(l+1)e_{i})}}}{\sqrt{\frac{\widehat{\rho}(\alpha)}{\widehat{\rho}(\alpha+(l+1)e_{i})}}}:|\alpha|< N_{1}, l<N_{2}\Biggl\}\quad \text{and}\quad M_{5}:=\max\Biggl\{\frac{\sqrt{\frac{\rho(\alpha)}{\rho(\alpha+(l+1)e_{i})}}}{\sqrt{\frac{\widehat{\rho}(\alpha)}{\widehat{\rho}(\alpha+(l+1)e_{i})}}}:|\alpha|< N_{1}, l<N_{2}\Biggl\}.$$
Then there are  positive constants
$$C'_{1}=\min\biggl\{\sqrt{\frac{1}{2}}m_{2},m_{3}(1-\varepsilon_{0})m_{4},m_{5}\biggl\}\quad\text{and}\quad
C'_{2}=\max\biggl\{\sqrt{\frac{3}{2}}M_{2},M_{3}(1+\varepsilon_{0})M_{4}, M_{5}\biggl\}$$ such that
$$0<C'_{1}\leq\frac{\prod\limits_{k=0}^{l}\sqrt{\frac{\rho(\alpha+ke_{i})}{\rho(\alpha+(k+1)e_{i})}}}{\prod\limits_{k=0}^{l}\sqrt{\frac{\widehat{\rho}(\alpha+ke_{i})}{\widehat{\rho}(\alpha+(k+1)e_{i})}}}
\leq C'_{2}$$
for any nonnegative integer $l$, $\alpha\in \mathbf{Z}_{+}^{m}$ and $1\leq i\leq m$.
By Theorem \ref{c4.2}, $\mathbf{T}$ is similar to $\mathbf{S}^*$.
\end{proof}

\section{The $N$-hypercontraction of commuting $m$-tuples of backward weighted shifts }
In this section, we mainly consider the description of  $n$-hypercontraction operator tuples. In the following, we give a necessary condition that commuting $m$-tuples of backward weighted shifts are $n$-hypercontractive, and use some specific examples to illustrate that $n$-hypercontraction in theorem \ref{3} is indispensable.

Before introducing the main theorem in this section, we need to prove the following lemma.
\begin{lem}\label{lem1}
Let $n\geq2$ be an integer. For integer $k,l$, if $2\leq k\leq n$ and $l>n$, then the following equations hold:
\begin{itemize}
  \item [(1)]$\sum\limits_{i=0}^{k}(-1)^{-i}{n-2+i \choose i}{n \choose k-i}=0$ and $\sum\limits_{i=0}^{k}(-1)^{-i}{n-2+i \choose i}{n \choose k-i}i=0$.
  \item [(2)]$\sum\limits_{i=l-n}^{l}
(-1)^{-i}{n-2+i \choose i}{n \choose l-i}=0$ and $\sum\limits_{i=l-n}^{l}
(-1)^{-i}{n-2+i \choose i}{n \choose l-i}i=0.$
\end{itemize}
\end{lem}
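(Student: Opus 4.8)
The plan is to read off all four identities as coefficients of a single pair of generating-function identities, after noting the harmless fact $(-1)^{-i}=(-1)^i$. The starting point is the negative binomial expansion
$$(1+x)^{-(n-1)}=\sum_{i=0}^{\infty}(-1)^i\binom{n-2+i}{i}x^i,$$
so that $(-1)^i\binom{n-2+i}{i}$ is exactly the coefficient of $x^i$ in $(1+x)^{-(n-1)}$, while $\binom{n}{k-i}$ is the coefficient of $x^{k-i}$ in $(1+x)^n$. The key observation is the trivial algebraic identity $(1+x)^{-(n-1)}(1+x)^{n}=1+x$, a polynomial of degree one.

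For the first equation in part (1), I would recognize the sum $\sum_{i=0}^{k}(-1)^i\binom{n-2+i}{i}\binom{n}{k-i}$ as the Cauchy-product coefficient of $x^k$ in $(1+x)^{-(n-1)}(1+x)^n=1+x$; since $k\geq 2$, this coefficient is $0$. For the weighted version I would instead differentiate: the series $x\frac{d}{dx}(1+x)^{-(n-1)}=-(n-1)x(1+x)^{-n}$ has $x^i$-coefficient equal to $(-1)^i\binom{n-2+i}{i}\,i$, and multiplying by $(1+x)^n$ collapses this to $-(n-1)x$, whose $x^k$-coefficient again vanishes for $k\geq 2$. This disposes of both equations in (1) at once, and notice that the hypothesis $k\leq n$ is not even needed for the identity itself.

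Part (2) reduces to part (1) after one observation, which is the only mild obstacle: since $l>n$, the factor $\binom{n}{l-i}$ is zero unless $l-n\leq i\leq l$, so the truncated sums over $i\in\{l-n,\dots,l\}$ coincide with the full Cauchy products $\sum_{i=0}^{l}$ with no extra terms. Consequently the two sums in (2) are precisely the $x^l$-coefficients of $1+x$ and of $-(n-1)x$, respectively, and since $l>n\geq 2$ forces $l\geq 2$, both coefficients vanish. Thus all four identities follow from extracting coefficients of a linear polynomial, and no genuine case analysis or induction is required.
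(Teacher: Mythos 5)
Your proof is correct and follows essentially the same route as the paper's: both expand the identity $(1\pm x)^{n}(1\pm x)^{-(n-1)}=1\pm x$ (the paper works with $1-x$, you with $1+x$) and read off the four sums as coefficients of degree-one polynomials, using a differentiation trick for the sums weighted by $i$. The only cosmetic differences are the substitution $x\mapsto -x$ and your use of the operator $x\frac{d}{dx}$ (giving $-(n-1)x$) in place of the paper's plain derivative (giving the constant $n-1$), which avoids an index shift but changes nothing essential.
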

\begin{proof}
Since $(1-x)^{n}=\sum\limits_{i=0}^{n}{n \choose i}(-x)^{i}$ and $\frac{1}{(1-x)^{n}}=\sum\limits_{i=0}^{\infty}{n-1+i \choose i}x^{i}$ for $|x|<1$, we have that
\begin{eqnarray*}
1-x&=&(1-x)^{n}\frac{1}{(1-x)^{n-1}}\\
&=&\Big[\sum\limits_{j=0}^{n}{n \choose j}(-x)^{j}\Big]\Big[\sum\limits_{i=0}^{\infty}{n-2+i \choose i}x^{i}\Big]\\
&=&\sum\limits_{k=0}^{n}\Big[\sum\limits_{i=0}^{k}(-1)^{k-i}{n-2+i \choose i}{n \choose k-i}\Big]x^{k}+\sum\limits_{k=n+1}^{\infty}\Big[\sum\limits_{i=k-n}^{k}(-1)^{k-i}{n-2+i \choose i}{n \choose k-i}\Big]x^{k}
\end{eqnarray*}
and
\begin{eqnarray*}
n-1&=&(1-x)^{n}\Big(\frac{1}{(1-x)^{n-1}}\Big)^{\prime}\\
&=&\Big[\sum\limits_{j=0}^{n}{n \choose j}(-x)^{j}\Big]\Big[\sum\limits_{i=1}^{\infty}{n-2+i \choose i}ix^{i-1}\Big]\\
&=&\sum\limits_{k=1}^{n}\Big[\sum\limits_{i=0}^{k}(-1)^{k-i}i{n-2+i \choose i}{n \choose k-i}\Big]x^{k-1}+\sum\limits_{k=n+1}^{\infty}\Big[\sum\limits_{i=k-n}^{k}(-1)^{k-i}i{n-2+i \choose i}{n \choose k-i}\Big]x^{k-1}.
\end{eqnarray*}
It can be proved by comparing the coefficients of $x^{k}$ on the left and right sides of the above formulas.
\end{proof}

\begin{thm}\label{thm2}
Let $m\geq2$ be a positive integer and $\mathbf{T}=(T_{1},\cdots,T_{m})$ be a commuting $m$-tuple of backward weighted shift operators on Hilbert space $\mathcal{H}$ with reproducing kernel
$K(z,w)=\sum\limits_{\alpha\in\mathbf{Z}_{+}^{m}}\rho(\alpha)z^{\alpha}\overline{w}^{\alpha}$. If $\mathbf{T}$ is $n$-hypercontractive, then for any non-zero
$\alpha=(\alpha_{1},\cdots,\alpha_{m})\in\mathbf{Z}_{+}^{m}$, $$\sum\limits_{\mbox{\tiny$\begin{array}{c}
 \beta\in{\mathbf{Z}}_{+}^{m}\\
\beta\leq\alpha \\
|\alpha-\beta|=1\end{array}$}}\frac{\rho(\beta)}{\rho(\alpha)}\leq\frac{|\alpha|}{|\alpha|+n-1}.$$
\end{thm}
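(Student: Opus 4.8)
The plan is to turn the $n$-hypercontractivity hypothesis into a family of diagonal positivity statements and then read off the desired estimate as a single \emph{nonnegative} linear combination of them, with weights dictated by the combinatorial identities of Lemma \ref{lem1}.

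First I would diagonalize. Since the $T_i$ are backward weighted shifts in the orthonormal basis $\{\mathbf{e}_\alpha\}$, each $\mathbf{T}^{*\gamma}\mathbf{T}^{\gamma}$ acts diagonally, with $\mathbf{T}^{*\gamma}\mathbf{T}^{\gamma}\mathbf{e}_\beta=\frac{\rho(\beta-\gamma)}{\rho(\beta)}\mathbf{e}_\beta$ when $\gamma\le\beta$ and $0$ otherwise. Hence $\triangle_{\mathbf{T}}^{(n)}$ is diagonal, and $n$-hypercontractivity gives, for every $\beta\in\mathbf{Z}_+^m$,
$$\langle\triangle_{\mathbf{T}}^{(n)}\mathbf{e}_\beta,\mathbf{e}_\beta\rangle\,\rho(\beta)=\sum_{\gamma\le\beta,\,|\gamma|\le n}(-1)^{|\gamma|}\frac{n!}{\gamma!(n-|\gamma|)!}\,\rho(\beta-\gamma)\ge0.$$
It is useful to observe that the quantity to be bounded is exactly $\sum_{j:\alpha_j\ge1}\frac{\rho(\alpha-e_j)}{\rho(\alpha)}=\langle\mathbf{M}_{\mathbf{T}}(I)\mathbf{e}_\alpha,\mathbf{e}_\alpha\rangle$, and that on the model space $H_n^2$ this equals $\frac{|\alpha|}{|\alpha|+n-1}$; so the assertion says the first-order datum of an arbitrary $n$-hypercontraction is dominated by that of the extremal model, and equality holds there because $\triangle^{(n)}$ vanishes on $H_n^2$.

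The heart of the argument is to exhibit nonnegative weights $c_\beta$, $\beta\le\alpha$, for which
$$\sum_{\beta\le\alpha}c_\beta\,\langle\triangle_{\mathbf{T}}^{(n)}\mathbf{e}_\beta,\mathbf{e}_\beta\rangle\,\rho(\beta)=|\alpha|\,\rho(\alpha)-(|\alpha|+n-1)\sum_{j:\alpha_j\ge1}\rho(\alpha-e_j).$$
As the left-hand side is then a sum of nonnegative terms, the right-hand side is $\ge0$, which is the claim after dividing by $|\alpha|\rho(\alpha)$. I would take
$$c_\beta=|\beta|\,\binom{n-2+|\alpha-\beta|}{|\alpha-\beta|}\,\frac{|\alpha-\beta|!}{(\alpha-\beta)!}\ge0,$$
where the binomial factor is the coefficient occurring in Lemma \ref{lem1} and the multinomial factor counts the monotone lattice paths from $\alpha$ down to $\beta$. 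To verify the identity I would expand the left-hand side, collect the coefficient of each $\rho(\delta)$ with $\delta\le\alpha$, and set $t=|\alpha-\delta|$. Using Lemma \ref{lem5.2} to evaluate the inner sum over multi-indices of fixed length $i$ (it collapses $\sum_{\mu}\frac{1}{\mu!(\alpha-\delta-\mu)!}$ to $\binom{t}{i}/(\alpha-\delta)!$), the coefficient of $\rho(\delta)$ reduces to
$$\frac{t!}{(\alpha-\delta)!}\sum_{i=0}^{\min(n,t)}(-1)^i\bigl(|\delta|+i\bigr)\binom{n-2+t-i}{t-i}\binom{n}{i}.$$
A direct evaluation for $t=0$ and $t=1$ gives the coefficients $|\alpha|$ and $-(|\alpha|+n-1)$; for $t\ge2$, splitting this sum into its $|\delta|$-part and its $i$-part and reindexing by $j=t-i$ turns the two parts into the two identities of Lemma \ref{lem1} (part (1) when $2\le t\le n$, part (2) when $t>n$), so the coefficient vanishes.

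The main obstacle is discovering the correct weights $c_\beta$ and pairing the two identities of Lemma \ref{lem1} with the two pieces that survive the reduction by Lemma \ref{lem5.2}: the factor $|\beta|$ in $c_\beta$ is what produces the summand $|\delta|+i$ and thereby forces the second (weighted) identity of Lemma \ref{lem1} into play, while the path-counting multinomial weight is precisely what makes the whole sum telescope so that only the two neighbors $\alpha$ and $\alpha-e_j$ of the claim remain. Once the weights are pinned down, positivity of the combination finishes the proof in one step, with no induction required.
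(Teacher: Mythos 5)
Your proposal is correct and follows essentially the same route as the paper: your nonnegative weights $c_\beta$ are exactly the paper's coefficients (namely $c_\alpha=|\alpha|$ and $c_\beta=-|\alpha|\,x_{\alpha-\beta}$ for $\theta<\beta<\alpha$), so your claimed identity is precisely equation (\ref{eq1}) multiplied through by $|\alpha|\rho(\alpha)$ and reindexed by $\beta=\alpha-\gamma$. The verification you outline --- collecting the coefficient of each $\rho(\delta)$, collapsing the multi-index sums via Lemma \ref{lem5.2}, computing $t=0,1$ directly, and annihilating the coefficients for $t\ge2$ with the two identities of Lemma \ref{lem1} (part (1) when $2\le t\le n$, part (2) when $t>n$) --- is the paper's Case 1/2/3 analysis in different notation.
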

\begin{proof}
Let $\{\mathbf{e}_{\alpha}\}_{\alpha\in \mathbf{Z}_{+}^{m}}$ be an orthonormal basis of the Hilbert space $\ell^{2}(\mathbf{Z}_{+}^{m},\mathcal{H})$. If $\mathbf{T}$ is $n$-hypercontractive, we know that $\triangle_{\mathbf{T}}^{(k)}\geq0$ for all $1\leq k\leq n$.
It follows that for any $\alpha\in\mathbf{\mathbf{Z}}_{+}^{m}$ and $1\leq k\leq n$,
$$\triangle_{\mathbf{T}}^{(k)}\mathbf{e}_{\alpha}=\sum \limits_{\beta\in {\mathbf{Z}}_{+}^{m}\atop|\beta|\leq k}(-1)^{|\beta|}\frac{k!}{\beta!(k-|\beta|)!}\mathbf{T}^{*\beta}\mathbf{T}^{\beta}\mathbf{e}_{\alpha}
=\sum_{\mbox{\tiny$\begin{array}{c}
  \beta\in {\mathbf{Z}}_{+}^{m}\\
 \beta\leq\alpha\\
|\beta|\leq k\end{array}$}}(-1)^{|\beta|}\frac{k!}{\beta!(k-|\beta|)!}\frac{\rho(\alpha-\beta)}{\rho(\alpha)}\mathbf{e}_{\alpha}\geq0,$$ that is,
\begin{equation}\label{eq2}
\sum_{\mbox{\tiny$\begin{array}{c}
  \beta\in {\mathbf{Z}}_{+}^{m}\\
 \beta\leq\alpha\\
|\beta|\leq k\end{array}$}}(-1)^{|\beta|}\frac{k!}{\beta!(k-|\beta|)!}\frac{\rho(\alpha-\beta)}{\rho(\alpha)}\geq0.
\end{equation}
We will prove that for any $\alpha=(\alpha_{1},\cdots,\alpha_{m})\in\mathbf{Z}_{+}^{m}$,
\begin{eqnarray}\label{eq1}
&&\sum_{\mbox{\tiny$\begin{array}{c}
  \beta\in {\mathbf{Z}}_{+}^{m}\\
 \beta\leq\alpha\\
|\beta|\leq n\end{array}$}}(-1)^{|\beta|}\frac{n!}{\beta!(n-|\beta|)!}\frac{\rho(\alpha-\beta)}{\rho(\alpha)}\\
&=&1-\sum\limits_{j=1}^{k}(n+x_{e_{i_{j}}})\frac{\rho(\alpha-e_{i_{j}})}{\rho(\alpha)}+\sum\limits_{\gamma\in{\mathbf{Z}}_{+}^{m}\atop \theta<\gamma<\alpha}x_{\gamma}
[\sum_{\mbox{\tiny$\begin{array}{c}
 \xi\in {\mathbf{Z}}_{+}^{m}\\
 \xi\leq\alpha-\gamma\\
|\xi|\leq n\end{array}$}}(-1)^{|\xi|}\frac{n!}{\xi!(n-|\xi|)!}\frac{\rho(\alpha-\gamma-\xi)}{\rho(\alpha-\gamma)}]\frac{\rho(\alpha-\gamma)}{\rho(\alpha)}\nonumber
\end{eqnarray}
where $x_{\gamma}=-\frac{(n-2+|\gamma|)!}{\gamma!(n-2)!}\frac{|\alpha-\gamma|}{|\alpha|}$ for $\theta<\gamma<\alpha.$

Note that
\begin{eqnarray}
&&\sum\limits_{\gamma\in{\mathbf{Z}}_{+}^{m}\atop \theta<\gamma<\alpha}x_{\gamma}
[\sum\limits_{\mbox{\tiny$\begin{array}{c}
 \xi\in {\mathbf{Z}}_{+}^{m}\\
 \xi\leq\alpha-\gamma\\
|\xi|\leq n\end{array}$}}(-1)^{|\xi|}\frac{n!}{\xi!(n-|\xi|)!}\frac{\rho(\alpha-\gamma-\xi)}{\rho(\alpha-\gamma)}]\frac{\rho(\alpha-\gamma)}{\rho(\alpha)}\\ \nonumber
&=&\sum\limits_{\gamma\in{\mathbf{Z}}_{+}^{m}\atop \theta<\gamma<\alpha}
\sum\limits_{\mbox{\tiny$\begin{array}{c}
 \xi\in {\mathbf{Z}}_{+}^{m}\\
 \gamma\leq\xi+\gamma\leq\alpha\\
|\xi|\leq n\end{array}$}}(-1)^{|\xi|}x_{\gamma}\frac{n!}{\xi!(n-|\xi|)!}\frac{\rho(\alpha-(\gamma+\xi))}{\rho(\alpha)}\\[4pt]\nonumber
&=&\sum\limits_{\gamma\in{\mathbf{Z}}_{+}^{m}\atop \theta<\gamma<\alpha}
\sum\limits_{\mbox{\tiny$\begin{array}{c}
 \beta\in {\mathbf{Z}}_{+}^{m}\\
 \gamma\leq\beta\leq\alpha\\
|\beta-\gamma|\leq n\end{array}$}}(-1)^{|\beta|-|\gamma|+1}\frac{(n-2+|\gamma|)!}{\gamma!(n-2)!}\frac{|\alpha-\gamma|}{|\alpha|}\frac{n!}{(\beta-\gamma)!(n-|\beta-\gamma|)!}\frac{\rho(\alpha-\beta)}{\rho(\alpha)}.\nonumber
\end{eqnarray}
We will prove equation (\ref{eq1}) in the following three cases. 

$\textbf{Case 1:}$
When $|\beta|= 0,1$. It is easy to see that the coefficients of $\frac{\rho(\alpha-\beta)}{\rho(\alpha)}$ on both sides of equation (\ref{eq1}) are equal.

$\textbf{Case 2:}$
When $2\leq|\beta|\leq n,$ from Lemma \ref{lem5.2} and Lemma \ref{lem1}, we have that
\begin{eqnarray*}
&&\sum\limits_{\gamma\in{\mathbf{Z}}_{+}^{m}\atop \theta<\gamma\leq\beta}(-1)^{|\beta|-|\gamma|+1}\frac{(n-2+|\gamma|)!}{\gamma!(n-2)!}\frac{|\alpha-\gamma|}{|\alpha|}\frac{n!}{(\beta-\gamma)!(n-|\beta-\gamma|)!}-(-1)^{|\beta|}\frac{n!}{\beta!(n-|\beta|)!}\\
&=&\sum\limits_{\gamma\in{\mathbf{Z}}_{+}^{m}\atop \gamma\leq\beta}(-1)^{|\beta|-|\gamma|+1}\frac{(n-2+|\gamma|)!}{\gamma!(n-2)!}\frac{|\alpha-\gamma|}{|\alpha|}\frac{n!}{(\beta-\gamma)!(n-|\beta-\gamma|)!}\\
&=&\sum\limits_{i=0}^{|\beta|}
(-1)^{|\beta|-i+1}\frac{(n-2+i)!}{(n-2)!}\frac{|\alpha|-i}{|\alpha|}\frac{n!}{(n-|\beta|+i)!}(\sum\limits_{\mbox{\tiny$\begin{array}{c}
 \gamma\in{\mathbf{Z}}_{+}^{m}\\
 \gamma\leq\beta\\
|\gamma|=i\end{array}$}}\frac{1}{\gamma!(\beta-\gamma)!})\\
&=&(-1)^{|\beta|+1}\frac{|\beta|!}{\beta!}\sum\limits_{i=0}^{|\beta|}
(-1)^{-i}\frac{(n-2+i)!}{i!(n-2)!}\frac{n!}{(|\beta|-i)!(n-|\beta|+i)!}\frac{|\alpha|-i}{|\alpha|}\\
&=&(-1)^{|\beta|+1}\frac{|\beta|!}{\beta!}\Big\{\Big[\sum\limits_{i=0}^{|\beta|}
(-1)^{-i}{n-2+i \choose i}{n \choose |\beta|-i}\Big]- \frac{1}{|\alpha|}\Big[\sum\limits_{i=0}^{|\beta|}
(-1)^{-i}{n-2+i \choose i}{n \choose |\beta|-i}i\Big]\Big\}\\
&=&0.
\end{eqnarray*}
This means that equation (\ref{eq1}) is valid in this case.

$\textbf{Case 3:}$
When $n<|\beta|\leq |\alpha|,$ from Lemma \ref{lem5.2} and Lemma \ref{lem1} again, we have that
\begin{eqnarray*}
&&\sum\limits_{\mbox{\tiny$\begin{array}{c}
 \gamma\in{\mathbf{Z}}_{+}^{m}\\
\theta<\gamma\leq\beta\\
|\beta-\gamma|\leq n\end{array}$}}(-1)^{|\beta|-|\gamma|+1}\frac{(n-2+|\gamma|)!}{\gamma!(n-2)!}\frac{|\alpha-\gamma|}{|\alpha|}\frac{n!}{(\beta-\gamma)!(n-|\beta-\gamma|)!}\\
&=&\sum\limits_{i=|\beta|-n}^{|\beta|}
(-1)^{|\beta|-i+1}\frac{(n-2+i)!}{(n-2)!}\frac{|\alpha|-i}{|\alpha|}\frac{n!}{(n-|\beta|+i)!}(\sum\limits_{\mbox{\tiny$\begin{array}{c}
 \gamma\in{\mathbf{Z}}_{+}^{m}\\
 \gamma\leq\beta\\
|\gamma|=i\end{array}$}}\frac{1}{\gamma!(\beta-\gamma)!})\\
&=&(-1)^{|\beta|+1}\frac{|\beta|!}{\beta!}\sum\limits_{i=|\beta|-n}^{|\beta|}
(-1)^{-i}\frac{(n-2+i)!}{i!(n-2)!}\frac{n!}{(|\beta|-i)!(n-|\beta|+i)!}\frac{|\alpha|-i}{|\alpha|}\\
&=&(-1)^{|\beta|+1}\frac{|\beta|!}{\beta!}\Big\{\Big[\sum\limits_{i=|\beta|-n}^{|\beta|}
(-1)^{-i}{n-2+i \choose i}{n \choose |\beta|-i}\Big]- \frac{1}{|\alpha|}\Big[\sum\limits_{i=|\beta|-n}^{|\beta|}
(-1)^{-i}{n-2+i \choose i}{n \choose |\beta|-i}i\Big]\Big\}\\
&=&0.
\end{eqnarray*}
This shows that equation (\ref{eq1}) is also valid in Case 3.

From equations (\ref{eq2}), (\ref{eq1}) and $x_{\gamma}=-\frac{(n-2+|\gamma|)!}{\gamma!(n-2)!}\frac{|\alpha-\gamma|}{|\alpha|}<0$ for $0<\gamma<\alpha$, we know that $1-\sum\limits_{j=1}^{k}(n+x_{e_{i_{j}}})\frac{\rho(\alpha-e_{i_{j}})}{\rho(\alpha)}=1-\frac{|\alpha|+n-1}{|\alpha|}\sum\limits_{j=1}^{k}\frac{\rho(\alpha-e_{i_{j}})}{\rho(\alpha)}>0$,
then  $\sum\limits_{j=1}^{k}\frac{\rho(\alpha-e_{i_{j}})}{\rho(\alpha)}\leq\frac{|\alpha|}{|\alpha|+n-1},$
where $\alpha=(\alpha_{1},\cdots,\alpha_{m})\in\mathbf{Z}_{+}^{m}$ and $\alpha_{i_{j}}\neq0$ for $1\leq j\leq k$. That is,
$\sum\limits_{\mbox{\tiny$\begin{array}{c}
 \beta\in{\mathbf{Z}}_{+}^{m}\\
\beta\leq\alpha \\
|\alpha-\beta|=1\end{array}$}}\frac{\rho(\beta)}{\rho(\alpha)}\leq\frac{|\alpha|}{|\alpha|+n-1}$.
\end{proof}

\begin{cor}
Let $\mathbf{T}=(T_{1},\cdots,T_{m})$ be the adjoint of the multiplication operators by coordinate functions on some Hilbert space with reproducing kernel $K(z,w)=\sum\limits_{i=0}^{\infty}a(i)(z_{1}\overline{w}_{1}+\cdots+z_{m}\overline{w}_{m})^{i},$ $a(i)>0$. If $\mathbf{T}$ is $n$-hypercontractive, then
$\frac{a(i-1)}{a(i)}\leq \frac{i}{i+n-1}$ for any positive integer $i$.
\end{cor}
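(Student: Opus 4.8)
The plan is to recognize this statement as a direct specialization of Theorem \ref{thm2}, once the radial kernel is written in the monomial form required there. First I would expand the kernel by the multinomial theorem: since
$$K(z,w)=\sum_{i=0}^{\infty}a(i)\langle z,w\rangle^{i}=\sum_{i=0}^{\infty}a(i)\sum_{|\alpha|=i}\frac{i!}{\alpha!}z^{\alpha}\overline{w}^{\alpha},$$
we obtain $K(z,w)=\sum_{\alpha\in\mathbf{Z}_{+}^{m}}\rho(\alpha)z^{\alpha}\overline{w}^{\alpha}$ with $\rho(\alpha)=a(|\alpha|)\frac{|\alpha|!}{\alpha!}$, which is exactly the coefficient treated in Theorem \ref{thm2} (and coincides with the $\widehat{\rho}$ appearing in the proof of Theorem \ref{3}). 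Thus $\mathbf{T}$ is a commuting $m$-tuple of backward weighted shifts of the type covered by that theorem, and its $n$-hypercontractivity is inherited by hypothesis.

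Next I would fix a positive integer $i$ and apply Theorem \ref{thm2} to any multi-index $\alpha$ with $|\alpha|=i$, for instance $\alpha=ie_{1}$. The left-hand sum there runs over the indices $\beta=\alpha-e_{j}$ with $\alpha_{j}\geq 1$, so the key computational step is to evaluate the individual ratios $\rho(\alpha-e_{j})/\rho(\alpha)$. Using $(\alpha-e_{j})!=\alpha!/\alpha_{j}$ together with $|\alpha-e_{j}|=i-1$, a short calculation gives
$$\frac{\rho(\alpha-e_{j})}{\rho(\alpha)}=\frac{a(i-1)(i-1)!\,\alpha_{j}/\alpha!}{a(i)\,i!/\alpha!}=\frac{a(i-1)}{a(i)}\,\frac{\alpha_{j}}{i}.$$

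Then I would sum over $j$. Because $\sum_{j=1}^{m}\alpha_{j}=|\alpha|=i$, the $\alpha_{j}$-dependence recombines and the left-hand side of the inequality in Theorem \ref{thm2} collapses to
$$\sum_{\beta\leq\alpha,\,|\alpha-\beta|=1}\frac{\rho(\beta)}{\rho(\alpha)}=\frac{a(i-1)}{a(i)}\cdot\frac{1}{i}\sum_{j=1}^{m}\alpha_{j}=\frac{a(i-1)}{a(i)},$$
independently of how the weight $i$ is distributed among the coordinates of $\alpha$. Comparing this with the right-hand bound $|\alpha|/(|\alpha|+n-1)=i/(i+n-1)$ yields $a(i-1)/a(i)\leq i/(i+n-1)$, as claimed. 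There is no substantive obstacle beyond the factorial bookkeeping: the only point deserving care is verifying that the weights $\alpha_{j}$ recombine to $|\alpha|$ upon summation, so that the resulting bound depends only on $i=|\alpha|$ and not on the particular $\alpha$ chosen. This cancellation is precisely what legitimizes the single-variable-looking conclusion in the genuinely multivariable setting.
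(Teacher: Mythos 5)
Your proof is correct and follows exactly the route the paper intends: the corollary is stated as an immediate consequence of Theorem \ref{thm2}, and your multinomial expansion giving $\rho(\alpha)=a(|\alpha|)\frac{|\alpha|!}{\alpha!}$ together with the computation $\sum_{j}\rho(\alpha-e_{j})/\rho(\alpha)=\frac{a(i-1)}{a(i)}$ is precisely the routine verification the paper leaves to the reader. No discrepancies with the paper's approach.
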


\begin{cor}
The adjoint of commuting $m$-tuples of unilateral weighted shifts whose spectrum is contained in $\mathbb{B}^{m}$ is not subnormal.
\end{cor}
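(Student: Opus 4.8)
The plan is to argue by contradiction, combining the spectral theory of the minimal normal extension with the quantitative obstruction of Theorem \ref{thm2}. As a preliminary remark, the adjoint $\mathbf{T}=\mathbf{S}^{*}$ of a commuting $m$-tuple $\mathbf{S}$ of unilateral weighted shifts is itself a commuting $m$-tuple of backward weighted shifts; normalizing the weights to be positive by a diagonal unitary and using the commutation relations $T_{i}T_{j}=T_{j}T_{i}$, the tuple $\mathbf{T}$ is unitarily equivalent to $\mathbf{M}_{z}^{*}$ on a reproducing kernel Hilbert space whose kernel has the diagonal form $K(z,w)=\sum_{\alpha}\rho(\alpha)z^{\alpha}\overline{w}^{\alpha}$ with $\rho(\alpha)>0$, exactly the setting of Theorem \ref{thm2}. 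The hypothesis $\sigma(\mathbf{T})\subseteq\mathbb{B}^{m}$ ensures that $\mathbf{T}$ is bounded and that this kernel is well defined on $\mathbb{B}^{m}$.

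Suppose, for contradiction, that $\mathbf{T}$ is subnormal, and let $\mathbf{N}=(N_{1},\dots,N_{m})$ be its minimal normal extension on a Hilbert space $\mathcal{K}\supseteq\mathcal{H}$, with joint spectral measure $E$. By the spectral inclusion $\sigma(\mathbf{N})\subseteq\sigma(\mathbf{T})\subseteq\mathbb{B}^{m}$ for the minimal normal extension, $E$ is supported in $\overline{\mathbb{B}^{m}}$, so $|z|\leq 1$ on its support. Since each $N_{i}$ leaves $\mathcal{H}$ invariant with $N_{i}|_{\mathcal{H}}=T_{i}$, for every $f\in\mathcal{H}$ and every $\alpha\in\mathbf{Z}_{+}^{m}$ one has $\|\mathbf{T}^{\alpha}f\|^{2}=\|\mathbf{N}^{\alpha}f\|^{2}=\int_{\overline{\mathbb{B}^{m}}}|z^{\alpha}|^{2}\,d\mu_{f}(z)$, where $\mu_{f}(\cdot)=\langle E(\cdot)f,f\rangle\geq 0$. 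Inserting this into the expansion $\triangle_{\mathbf{T}}^{(n)}=\sum_{|\alpha|\leq n}(-1)^{|\alpha|}\frac{n!}{\alpha!(n-|\alpha|)!}\mathbf{T}^{*\alpha}\mathbf{T}^{\alpha}$ and applying the multinomial theorem to $(1-\sum_{i}|z_{i}|^{2})^{n}$ collapses the moments into
$$\langle\triangle_{\mathbf{T}}^{(n)}f,f\rangle=\int_{\overline{\mathbb{B}^{m}}}\big(1-|z|^{2}\big)^{n}\,d\mu_{f}(z)\geq 0,$$
because $0\leq|z|^{2}\leq 1$ on the support of $\mu_{f}$. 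Hence $\triangle_{\mathbf{T}}^{(n)}\geq 0$ for every $n\geq 1$; that is, $\mathbf{T}$ is $n$-hypercontractive for all $n$.

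I would then apply Theorem \ref{thm2} for each $n$ and let $n\to\infty$. Fix any nonzero $\alpha\in\mathbf{Z}_{+}^{m}$ and put $L_{\alpha}:=\sum_{\beta\leq\alpha,\,|\alpha-\beta|=1}\rho(\beta)/\rho(\alpha)$. Since $\alpha$ is nonzero, some coordinate $\alpha_{i}\geq 1$, so $\beta=\alpha-e_{i}$ contributes the strictly positive term $\rho(\alpha-e_{i})/\rho(\alpha)>0$; thus $L_{\alpha}>0$ is a fixed constant independent of $n$. On the other hand, Theorem \ref{thm2} yields $L_{\alpha}\leq|\alpha|/(|\alpha|+n-1)$ for every $n\geq 1$, and the right-hand side tends to $0$ as $n\to\infty$. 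Passing to the limit gives $L_{\alpha}\leq 0$, which contradicts $L_{\alpha}>0$. Therefore $\mathbf{T}=\mathbf{S}^{*}$ cannot be subnormal.

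The main obstacle is the middle paragraph: showing that subnormality together with $\sigma(\mathbf{T})\subseteq\mathbb{B}^{m}$ forces $n$-hypercontractivity for every $n$. This rests on the spectral-integral representation of the moments $\|\mathbf{T}^{\alpha}f\|^{2}$ and on the containment of the spectral measure of the minimal normal extension in $\overline{\mathbb{B}^{m}}$. Once the identity $\langle\triangle_{\mathbf{T}}^{(n)}f,f\rangle=\int(1-|z|^{2})^{n}\,d\mu_{f}$ is established, the nonnegativity is immediate and the final contradiction through Theorem \ref{thm2} and $n\to\infty$ is routine.
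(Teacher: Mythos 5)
Your argument is correct, and its skeleton is the same as the paper's: show that subnormality plus $\sigma(\mathbf{T})\subseteq\mathbb{B}^{m}$ forces $\mathbf{T}$ to be $n$-hypercontractive for \emph{every} $n$, then fix a nonzero $\alpha$, apply Theorem \ref{thm2} for each $n$, and let $n\to\infty$ to contradict the strict positivity of $\rho(\alpha-e_{i})/\rho(\alpha)$ (this last step is verbatim the paper's). Where you genuinely diverge is in the first step: the paper disposes of it with a single citation of Athavale \cite{AA} (Theorem 5.2 there states that, for a commuting tuple with spectrum in $\mathbb{B}^{m}$, being $n$-hypercontractive for all $n$ is \emph{equivalent} to subnormality), whereas you prove the one implication actually needed from scratch, via the minimal normal extension $\mathbf{N}$, its joint spectral measure, and the multinomial collapse $\langle\triangle_{\mathbf{T}}^{(n)}f,f\rangle=\int(1-|z|^{2})^{n}\,d\mu_{f}\geq 0$. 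What your route buys is self-containedness --- it is in effect a proof of the relevant half of Athavale's theorem --- and it makes transparent exactly where the spectral hypothesis enters (to keep $|z|\leq 1$ on the support of $\mu_{f}$). The one ingredient you use without justification is the spectral inclusion $\sigma(\mathbf{N})\subseteq\sigma(\mathbf{T})$ for the minimal normal extension of a commuting \emph{tuple}: in one variable this is Halmos's classical theorem, but for the joint (Taylor) spectrum it is a nontrivial result (due to Putinar), so that step deserves a citation; alternatively, since $\sigma(\mathbf{T})$ is a compact subset of the convex set $\mathbb{B}^{m}$, an inclusion of polynomially convex hulls would also suffice. With such a reference supplied, your proof is complete, and the paper's appeal to \cite{AA} can be viewed as packaging precisely this spectral-theoretic work.
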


\begin{proof}
By Theorem 5.2 in \cite{AA}, we know that for a commuting $m$-tuple whose spectrum is contained in $\mathbb{B}^{m}$,
it is $n$-hypercontractive for all positive integers $n$ if and only if it is subnormal. Let $\mathbf{T}=(T_{1},\cdots,T_{m})$ be the adjoint of commuting $m$-tuple of unilateral weighted shifts on Hilbert space $\mathcal{H}$ with reproducing kernel
$K(z,w)=\sum\limits_{\alpha\in\mathbf{Z}_{+}^{m}}\rho(\alpha)z^{\alpha}\overline{w}^{\alpha}$. Assume that $\mathbf{T}=(T_{1},\cdots,T_{m})$ is subnormal.
It follows that $\mathbf{T}$ is $n$-hypercontractive for all integers $n>0$.
By Theorem \ref{thm2}, for any $\theta\neq\alpha=(\alpha_{1},\cdots,\alpha_{m})\in\mathbf{Z}_{+}^{m}$, without losing generality, we assume that
$\alpha_{i_{0}}\neq0$, $1\leq i_{0}\leq m$, then $$\frac{\rho(\alpha-e_{i_{0}})}{\rho(\alpha)}\leq\sum\limits_{\mbox{\tiny$\begin{array}{c}
 \beta\in{\mathbf{Z}}_{+}^{m}\\
\beta\leq\alpha \\
|\alpha-\beta|=1\end{array}$}}\frac{\rho(\beta)}{\rho(\alpha)}\leq\frac{|\alpha|}{|\alpha|+n-1}.$$ Since $\lim\limits_{n\rightarrow \infty} \frac{|\alpha|}{|\alpha|+n-1}=0,$ we have $\frac{\rho(\alpha-e_{i_{j}})}{\rho(\alpha)}=0.$ This is a contradiction.
\end{proof}

\begin{ex}\label{maincor}
For the adjoint $\mathbf{S}^*=(S_{1}^{*},\cdots,S_{m}^{*})$ of the multiplication operators by coordinate functions on $H_{n}^{2}$, there exists an $m$-tuple $\mathbf{T}=(T_{1},\cdots,T_{m})$ that is not an $n$-hypercontraction and a positive, bounded, real-analytic function $\varphi$ defined on $\mathbb{B}^{m}$ such that
$$\mathcal{K}_\mathbf{\mathbf{S}^*}(w)-\mathcal{K}_\mathbf{\mathbf{T}}(w)=\sum\limits_{i,j=1}^{m}\frac{\partial^{2}\varphi(w)}{\partial w_{i}\partial \overline{w}_{j}}d w_{i}\wedge d \overline{w}_{j},\quad w \in \mathbb{B}^{m}.$$
 Moreover, $\mathbf{T}$ is not similar to $\mathbf{S}^*$.
\end{ex}
\begin{proof} Let $\mathbf{T}$ be the adjoint of multiplication operators on some Hilbert space with reproducing kernel $K(z,w)=\sum\limits_{\alpha\in\mathbf{Z}_{+}^{m}}\widetilde{\rho}(\alpha)z^{\alpha}\overline{w}^{\alpha}$. Then
$\mathcal{K}_\mathbf{\mathbf{S}^*}(w)-\mathcal{K}_\mathbf{\mathbf{T}}(w)=\sum\limits_{i,j=1}^{m}\frac{\partial^{2}}{\partial w_{i}\partial \overline{w}_{j}}\log[K(w,w)(1-|w|^{2})^{n}]d w_{i}\wedge d \overline{w}_{j}.$
Note that $\{\mathbf{e}_{\alpha}(z)=\sqrt{\rho(\alpha)}z^{\alpha}\}_{\alpha\in\mathbf{Z}_{+}^{m}}$ is an orthonormal basis of $H_{n}^{2}$ and $ S_{i}^{*}\mathbf{e}_{\alpha}=\sqrt{\frac{\rho(\alpha-e_{i})}{\rho(\alpha)}}\mathbf{e}_{\alpha-e_{i}}$ for $\rho(\alpha)=\frac{(n+|\alpha|-1)!}{\alpha!(n-1)!}$. Then letting
$$\widetilde{\rho}(\alpha)=
\begin{cases}
\frac{\rho(\alpha)}{k} \qquad \alpha=\beta^{l}+ke_{1},\,\,1 \leq k \leq l,\\
\frac{\rho(\alpha)}{k}\qquad \alpha=\beta^{l}+(2l-k)e_{1},\,\,1 \leq k \leq l,\\
\rho(\alpha)\qquad otherwise,
\end{cases}$$
where $\{\beta^{l}=(\beta_{1}^{l},\cdots,\beta_{m}^{l})\}_{l=1}^{\infty}\subseteq\mathbf{Z}_{+}^{m}$ satisfying $|\beta^{l}|>\max\{\frac{n^{n}2^{3l+1}}{(n-1)!}-n,n-2\}$ and $|\beta^{l}|+2l<|\beta^{l+1}|$, then $[\widetilde{\rho}(\alpha)-\rho(\alpha)]\frac{\alpha!}{|\alpha|!}$ is only related to $|\alpha|$, not $\alpha$, and
\begin{eqnarray*}K(w,w)&=&\frac{1}{(1-|w|^{2})^{n}}+\sum\limits_{l=0}^{\infty}\Big[\sum\limits_{\alpha\in\mathbf{Z}_{+}^{m}\atop |\alpha|=l}\left(\widetilde{\rho}(\alpha)-\rho(\alpha)\right )w^{\alpha}\overline{w}^{\alpha}\Big]\\
&=&\frac{1}{(1-|w|^{2})^{n}}+\sum\limits_{l=2}^{\infty} \Big\{\sum\limits_{k=2}^{l}\left(\widetilde{\rho}(\beta^{l}+ke_{1})-\rho(\beta^{l}+ke_{1})\right)\frac{(\beta^{l}+ke_{1})!}{|\beta^{l}+ke_{1}|!}|w|^{2|\beta^{l}+ke_{1}|}\\
&&+\sum\limits_{k=2}^{l-1}\left(\widetilde{\rho}(\beta^{l}+(2l-k)e_{1})-\rho(\beta^{l}+(2l-k)e_{1})\right)\frac{(\beta^{l}+(2l-k)e_{1})!}{|\beta^{l}+(2l-k)e_{1}|!}|w|^{2|\beta^{l}+(2l-k)e_{1}|}\Big\}\\
&:=&\frac{1}{(1-|w|^2)^n}+\sum\limits_{l=2}^{\infty} g_l(w).
\end{eqnarray*}
Since $|\beta^{l}|>n-2$ for all $l\geq1$, we have
$$\begin{array}{lll}
&&|g_{l}(w)|\\&=&\frac{(n+|\beta^{l}|-1)!}{|\beta^{l}|!(n-1)!}|w|^{2|\beta^{l}|}\left | \sum\limits_{k=2}^{l}\frac{(n+|\beta^{l}+ke_{1}|-1)!|\beta^{l}|!}{(n+|\beta^{l}|-1)!|\beta^{l}+ke_{1}|!}\left (\frac{1}{k}-1 \right)|w|^{2k}+\sum\limits_{k=2}^{l-1}\frac{(n+|\beta^{l}+(2l-k)e_{1}|-1)!|\beta^{l}|!}{(n+|\beta^{l}|-1)!|\beta^{l}+(2l-k)e_{1}|!}\left (\frac{1}{k}-1 \right)|w|^{4l-2k}\right |\\
&\leq&\frac{(n+|\beta^{l}|-1)!}{|\beta^{l}|!(n-1)!}|w|^{2|\beta^{l}|}\left |\sum\limits_{k=2}^{l}2^{k}\left(\frac{1}{k}-1\right)|w|^{2k}+\sum\limits_{k=2}^{l-1}2^{2l-k}\left (\frac{1}{k}-1\right)|w|^{4l-2k}\right |
\end{array}$$
and $M_l=\sup\limits_{|w|<1} \left |\sum\limits_{k=2}^{l}2^{k}\left(\frac{1}{k}-1\right)|w|^{2k}+\sum\limits_{k=2}^{l-1}2^{2l-k}\left (\frac{1}{k}-1\right)|w|^{4l-2k}\right |<2^{2l-1}.$
Setting $f(x)=x^{|\beta^{l}|}(1-x)^{n}$ for $0\leq x\leq 1$,
then $f(x)$ attains a maximum of $\left (\frac{|\beta^{l}|}{|\beta^{l}|+n} \right)^{|\beta^{l}|}\left (\frac{n}{|\beta^{l}|+n} \right)^{n}$ at $x=\frac{|\beta^{l}|}{|\beta^{l}|+n}.$
Therefore, from $|\beta^{l}|>\frac{n^{n}2^{3l+1}}{(n-1)!}-n,$ for all $w\in\mathbb{B}^{m},$
\begin{tiny}
\begin{equation*}
|g_{l}(w)|(1-|w|^{2})^{n}\leq M_{l}\frac{(n+|\beta^{l}|-1)!}{|\beta^{l}|!(n-1)!}|w|^{2|\beta^{l}|}(1-|w|^{2})^{n}
\leq2^{2l-1}\frac{(n+|\beta^{l}|-1)!}{|\beta^{l}|!(n-1)!}\frac{|\beta^{l}|^{|\beta^{l}|}n^{n}}{(|\beta^{l}|+n)^{|\beta^{l}|+n}}
\leq\frac{2^{2l-1}n^{n}}{(n-1)!(|\beta^{l}|+n)}
\leq\frac{1}{2^{l+2}}.
\end{equation*}
\end{tiny}
Since $K(w,w)(1-|w|^{2})^{n}=1+\sum\limits_{l=2}^{\infty}g_{l}(w)(1-|w|^{2})^{n},$ we know that
$\frac{7}{8} < K(w,w)(1-|w|^{2})^{n} <\frac{9}{8}.$
So $K(w,w)(1-|w|^2)^n$ is bounded and positive.
For any $l>2$ and $\alpha=\beta^{l}+(l-1)e_{1}\in\mathbf{Z}_{+}^{m}$, suppose $\alpha_{1}\neq0$ and $\alpha_{i}=0$ for $2\leq i\leq m$, then
$$\frac{\widetilde{\rho}(\alpha-e_{1})}{\widetilde{\rho}(\alpha)}=
\frac{\widetilde{\rho}(\beta^{l}+(l-2)e_{1})}{\widetilde{\rho}(\beta^{l}+(l-1)e_{1})}=\frac{l-1}{l-2}\frac{|\alpha|}{|\alpha|+n-1}>\frac{|\alpha|}{|\alpha|+n-1}$$
and
$\frac{\prod\limits_{k=1}^{l-1}\frac{\widetilde{\rho}(\beta^{l}+ke_{1})}{\widetilde{\rho}(\beta^{l}+(k+1)e_{1})}}{\prod\limits_{k=1}^{l-1}\frac{\rho(\beta^{l}+ke_{1})}{\rho(\beta^{l}+(k+1)e_{1})}}=
\frac{\rho(\beta^{l}+le_{1})}{\widetilde{\rho}(\beta^{l}+le_{1})}=l\rightarrow+\infty,$
as $l\rightarrow+\infty$. Thus, from Theorem \ref{c4.2} and Theorem \ref{thm2} , $\mathbf{T}$ is not an $n$-hypercontraction and $\mathbf{T}$ is not similar to $\mathbf{S}^*$.
\end{proof}


\begin{thebibliography}{99}
\bibitem{A1}{J. Agler}, \emph{The Arveson extension theorem and coanalytic models}, Integral Equations Operator Theory,  $\mathbf{5}$ (1982), no. 5, 608-631.

\bibitem{A2}{J. Agler}, \emph{Hypercontractions and subnormality}, J. Operator Theory,  $\mathbf{13}$ (1985), no. 2, 203-217.

\bibitem{LAA}{L. A. A\u{1}zenberg}, \emph{Pluriharmonic functions}, Dokl. Akad. Nauk SSSR, $\mathbf{124}$ (1959),  967-969.

\bibitem{AM}{J. Arazy and M. Engli\v{s}}, \emph{Analytic models for commuting operator tuples on bounded symmetric domains}, Trans. Amer. Math. Soc.,  $\mathbf{355}$ (2003), no. 2, 837-864.

\bibitem{AA}{A. Athavale}, \emph{Model theory on the unit ball in $\mathbb{C}^{m}$}, J. Operator Theory,  $\mathbf{27}$ (1992), no. 2, 347-358.

\bibitem {CD}{M. J. Cowen and R. G. Douglas}, \emph{Complex geometry and operator theory}. Acta. Math., $\mathbf{141}$ (1978), no. 1, 187-261.

\bibitem {CD2}{M. J. Cowen and R. G. Douglas}, \emph{Operators possessing an open set of eigenvalues}. Functions, series, operators, Vol. I, II (Budapest, 1980), Colloq. Math. Soc. Jnos Bolyai, vol. 35, North-Holland, Amsterdam, 1983. pp.323-341.

\bibitem{CM2}{D. N. Clark and G. Misra}, \emph{On curvature and similarity}, Michigan Math. J., $\mathbf{30}$ (1983), no. 3, 361-367.

\bibitem{CM1}{D. N. Clark and G. Misra}, \emph{On weighted shifts, curvature and similarity}, J. London Math. Soc. (2) $\mathbf{31}$ (1985), no. 2, 357-368.

\bibitem{CM3}{D. N. Clark and G. Misra}, \emph{The curvature function and similarity of operators}, Matematicki Vesnik, $\mathbf{37}$ (1985), no. 1, 2-32.

\bibitem{CS2} R. E. Curto, N. Salinas, {\it Generalized Bergman kernels and the Cowen-Douglas theory}, American Journal of Mathematics, {\bf106} (1984), no. 2, 447-488.

\bibitem{DKT}{R. G. Douglas, H. K. Kwon and S. Treil}, \emph{Similarity of n-hypercontractions and backward Bergman shifts}. J. London Math. Soc., $\mathbf{88}$ (2013), no. 3, 637-648.

\bibitem{Gilfeather}{F. Gilfeather}, \emph{Strong reducibility of operators}, Indiana Univ. Math., $\mathbf{22}$ (1972), no. 4, 393-397.

\bibitem{GKL}{P. Graczyk, T. Kemp and J. J. Loeb}, \emph{Hypercontractivity for log-subharmonic functions}, J. Funct. Anal., $\mathbf{258}$ (2010), no. 6, 1785-1805.

\bibitem{HJK}{Y. Hou, K. Ji and H. Kwon}, \emph{The trace of the curvature determines similarity},  Studia Math., $\mathbf{236}$ (2017), no. 2, 193-200.

\bibitem{CK}{C. L. Jiang and K. Ji,} \emph{Similarity classification of holomorphic curves}, Adv. Math., $\mathbf{215}$ (2007), 446-468.

\bibitem{JJDG}{K. Ji, C. L. Jiang, D. K. Keshari and G. Misra},  \emph{Rigidity of the flag structure for a class of Cowen-Douglas operators}. J. Funct. Anal., $\mathbf{272}$ (2017), no. 7, 2899-2932.

\bibitem{JKX}{K. Ji, H. Kwon and J. Xu},  \emph{$N$-hypercontractivity and similarity of Cowen-Douglas operators}. Linear Algebra Appl., $\mathbf{592}$ (2020), no. 1, 20-47.

\bibitem{JJD}{C. L. Jiang, K. Ji and D. K. Keshari}, \emph{Geometric Similarity invariants of Cowen-Douglas Operators}. (2019), arXiv:1901.03993.

\bibitem{KT}{H. Kwon and S. Treil}, \emph{Similarity of operators and geometry of eigenvector bundles}. Publ. Mat., $\mathbf{53}$ (2009), no. 2, 417-438.

\bibitem{Kumar}{S. Kumar}, \emph{Spherically Balanced Hilbert Spaces of Formal Power Series in Several Variables-II}. Complex Anal. Oper. Theory, $\mathbf{10}$ (2016), no. 3, 505-526.

\bibitem{D}{D. K. Keshari}, \emph{Trace formulae for curvature of jet bundles over planar domains}, Complex Anal. Oper. Theory, $\mathbf{8}$ (2014), no. 8, 1723-1740.

\bibitem{PL}{P. Lelong}, \emph{Les fonctions plurisousharmoniques}, Ann. Scicole Norm. Sup., $\mathbf{62}$ (1945), no. 3, 301-338.

\bibitem{M}{G. Misra}, \emph{Curvature inequalities and extremal properties of bundle shifts}, J. Operator Theory, $\mathbf{11}$ (1984), no. 2, 305-317.

\bibitem{M2}{G. Misra}, \emph{Curvature and the backward shift operators}, Proc. Amer. Math. Soc., $\mathbf{91}$ (1984), no. 1, 105-107.

\bibitem{MS1} {G. Misra and N.S.N. Sastry}, \emph{Contractive modules, extremal problems and curvature inequalities}, J. Funct. Anal., $\mathbf{88}$ (1990), 118-134.

\bibitem{MS2} {G. Misra and N.S.N. Sastry}, \emph{Completely contractive modules and associated extremal problems}, J. Funct. Anal., $\mathbf{91}$ (1990), 213-220.

\bibitem{V} {V. M\"{u}ller, F. H. Vasilescu}, \emph{Standard models for some commuting multioperators}, P. Am. Math. Soc., $\mathbf{117}$ (1993), no. 4, 979-989.

\bibitem{KO} {K. Oka}, \emph{Sur les fonctions analytiques de plusieurs variables. VI. Domaines pseudoconvexes}, Tohoku Math. J. $\mathbf{49}$ (1942), 15-52.

\bibitem{Pi}{V. S. Pilidi},  \emph{Similarity of multiple weighted shift operators},Izv. Severo-Kavkaz. Nauchn. Tsentra Vyssh. Shkoly Ser. Estestv. Nauk.$\mathbf{107}$ (1979),no. 3, 8-10.


\bibitem{SH}{A. L. Shields,} \emph{Weighted shift operators and analytic function theory,} Math. Surveys, Amer. Math. Soc., Providence, R.I., (1974), no. 13, 49-128.

\bibitem{ZKH}{K. Zhu}, \emph{Operators in Cowen-Douglas classes}. Illinois J. Math., $\mathbf{44}$ (2000), no. 4, 767-783.

\bibitem{N} {N. K. Nikol'ski$\breve{1}$}, \emph{Treatise on the shift operator}, Grundlehren der Mathematischen Wissenschaften [Fundamental Principles of Mathematical Sciences], vol. 273, Springer-Verlag, Berlin, 1986, Spectral function theory. With an appendix by S. V.
     Hru\v{s}\v{c}ev [S. V. Khrushch\"{e}v] and V. V. Peller, Translated from the Russian by Jaak Peetre.
\end{thebibliography}
\end{document}